\numberwithin{equation}{section}
\theoremstyle{plain}  
\newtheorem{thm}[equation]{Theorem}
\newtheorem{prop}[equation]{Proposition}
\newtheorem{lemma}[equation]{Lemma}
\theoremstyle{definition}  
\newtheorem{remark}[equation]{Remark}
\newcommand{\ind}{\hspace{15 pt}}
\newcommand{\Tor}{\text{Tor}}
\newcommand{\ZZ}{\mathbb{Z}}
\newcommand{\ER}{\mathbb{ER}}
\renewcommand{\MR}{\mathbb{MR}}
\newcommand{\cp}{\mathbb{CP}^\infty}
\newcommand{\vhat}{\widehat{v}}
\newcommand{\uhat}{\widehat{u}}
\newcommand{\chat}{\widehat{c}}
\newcommand{\ubar}{\uhat^\ast}
\newcommand{\uub}{\uhat \ubar}
\newcommand{\phat}{\widehat{p}}
\newcommand{\N}{\mathcal{N}}
\newcommand{\Nr}{N_*^{\text{res}}}
\newcommand{\imN}{\mathrm{im}(\N_*)}
\newcommand{\imNr}{\mathrm{im}(\N_*^{\text{res}})}
\newcommand{\imn}{\mathrm{im}(N_*)}
\newcommand{\imnr}{\mathrm{im}(N_*^{\text{res}})}
\newcommand{\Nt}{\widetilde{N}}
\newcommand{\inv}{\text{inv}}
\begin{document}
\pagestyle{plain}

\title
{The Real Johnson-Wilson cohomology of $\cp$}
\author{Vitaly Lorman}
\address{Department of Mathematics, Johns Hopkins University, Baltimore, USA}
\email{vlorman@math.jhu.edu}
\date{\today}


{\abstract
We completely compute the Real Johnson-Wilson cohomology of $\cp$. Applying techniques from equivariant stable homotopy theory to the Bockstein spectral sequence, we produce permanent cycles and solve extension problems to give an explicit description of the ring $ER(n)^*(\cp)$.}
\maketitle

\section{Introduction}\label{sec:1}
The complex cobordism spectrum, $MU$, carries an action of $C_2$, the group of order two, coming from complex conjugation and may be constructed as a genuine (indexed on a complete universe) $C_2$-equivariant spectrum, $\MR$ \cite{Lan68, Fuj, AM78}. At the prime 2, the Johnson-Wilson spectrum, $E(n)$, is an $MU$-algebra with coefficients
$$E(n)^*=\ZZ_{(2)}[v_1, \dots, v_{n-1}, v_n^{\pm 1}]$$
where $v_k$ is in cohomological degree $-2(2^k-1)$. $E(n)$ may also be constructed as a genuine $C_2$-equivariant spectrum, Real Johnson-Wilson Theory, $\ER(n)$ \cite{HK01}.  On the category of $C_2$-equivariant spaces, $\ER(n)$ naturally gives rise to a multiplicative cohomology theory valued in commutative $\MR$-algebras \cite{KLW16c}. Its underlying nonequivariant spectrum is $E(n)$. Let $ER(n)$ denote the fixed point spectrum $\ER(n)^{C_2}$. 

Fixed point spectra associated to $\MR$ have proved to be powerful tools for algebraic topologists, for example in Hill, Hopkins, and Ravenel's resolution of the Kervaire invariant one problem \cite{HHR09} (in all dimensions besides 126). At heights $n=1$ and $2$, the $ER(n)$ are familiar cohomology theories.} $\ER(1)$ is $\mathbb{KR}_{(2)}$, Atiyah's Real K-theory with underlying nonequivariant spectrum $KU_{(2)}$, and $ER(1)$ is $KO_{(2)}$, real K-theory. After a suitable completion, the spectrum $ER(2)$ is (additively) equivalent to the spectrum $TMF_0(3)$ of topological modular forms with level structure \cite{MR09, HM15}, and so the results of this paper apply there. At present, this identification provides the most powerful approach to computing $TMF_0(3)$-cohomology (for example, see \cite{LO16}).

Just as there is a fibration
$$\xymatrix{\Sigma KO \ar@{->}[r]^-\eta &KO \ar@{->}[r]&KU}, \ind \eta \in KO^{-1}$$
there is for each $n$ a fibration
$$\xymatrix{\Sigma^{\lambda(n)} ER(n) \ar@{->}[r]^-{x(n)}&ER(n) \ar@{->}[r]&E(n)}$$
where $x(n) \in ER(n)^{-\lambda(n)}$ is $(2^{n+1}-1)$-nilpotent and $\lambda(n)=2(2^n-1)^2-1=2^{2n+1}-2^{n+2}+1$. This was constructed in ~\cite{KW07} and leads to a Bockstein spectral sequence of the form
$$E_1^{i, j}=E(n)^{i\lambda(n)+j-i}(X) \Rightarrow ER(n)^{j-i}(X).$$

Our main result is a computation of $ER(n)^*(\cp)$ for all $n$. $ER(n)$ is not a complex oriented theory, and so the computation is nontrivial. The Atiyah-Hirzebruch spectral sequence is unwieldy, so we use the above Bockstein spectral sequence instead. Even this has nontrivial higher differentials, but what makes $ER(n)^*(\cp)$ computable is the fact that, after some rearranging, the only interesting differential is $d_1$ and the higher differentials all play out in the coefficients. We develop methods of producing permanent cycles and obtain control over the extension problems in the Bockstein spectral sequence to produce a complete description of the ring $ER(n)^*(\cp)$.

In the case of $ER(1)=KO_{(2)}$, this problem has a long history. $KO^*(\cp)$ was first computed in degree zero by Sanderson \cite{San64}, then in all degrees with ring structure on $KO^{\text{even}}(\cp)$ by Fujii \cite{Fuj67}. Yamaguchi \cite{Yam07} gave the first complete description of $KO^*(\cp)$ as a ring. All three computations use the Atiyah-Hirzebruch spectral sequence. Bruner and Greenlees \cite{BG10} computed the connective real $K$-theory of $\cp$ using the Bockstein spectral sequence. Our result gives the $ER(n)$-cohomology of $\cp$ for all $n$ in the same level of detail as Yamaguchi's description and reproduces $n=1$ as a nearly degenerate case (see Remarks \ref{remark:KOintro} and \ref{remark:KO}). The answer for $n>1$ is richer in the sense that it yields a great deal more 2-torsion.

This paper forms part of a program to compute the $ER(n)$-cohomology of basic spaces. The spaces whose $ER(n)$-cohomology is known at present may be divided into two families. Building on the computations in \cite{KW08a, KW08b, KW14}, the results of \cite{KLW16a} identify a class of spaces whose $ER(n)$-cohomology is directly computed from $E(n)$-cohomology (which is known) by base change. These include $BO(q)$ for $q \leq \infty$, the connective covers $BSO$, $BSpin$ and $BString$ (the last for $n\leq 2$ only), and half of all Eilenberg MacLane spaces---those of the form $K(\ZZ, 2k+1)$ and $K(\ZZ/2^q, 2k)$. The space of interest in this paper, $\cp=K(\ZZ, 2)$, does not belong to this family and is the first space of `exotic type' whose $ER(n)$-cohomology is nevertheless computable. Our computations here open the door to the results of \cite{KLW16b}, which deal with the $ER(2)$-cohomology of $B\ZZ/2^q$ and truncated complex projective spaces $\mathbb{CP}^k$. The computation of $ER(n)^*(\cp)$ also points the way toward the $ER(n)$-cohomology of the spaces $\prod_{i=1}^j \cp$, $BU(q)$ and its connective covers, and the other half of the Eilenberg MacLane spaces.

One motivation for developing $ER(n)$ (and $ER(2)$ in particular) as a computable theory is the applications to proving nonimmersions of projective spaces. Kitchloo and Wilson \cite{KW08a, KW08b} and Banerjee \cite{Ban13} established new nonimmersion results for real projective spaces by constructing obstructions in $ER(2)$-cohomology. These obstructions are given by powers of a generating class surviving beyond the skeletal truncation of the projective space---they are 2-torsion and undetectable by any complex oriented theory. The computations in \cite{KLW16b} reveal the same sorts of extra powers of a generating class (the class $\phat_1$ described in the next paragraph) in the $ER(2)$-cohomology of $\mathbb{CP}^k$, and one goal of this computational program is to attack the nonimmersion problem and related questions for complex projective spaces.

Before stating the main result, we fix some notation. For any $z \in E(n)^{2k}(\cp)$, let $\widehat{z}$ denote $z v_n^{k(2^n-1)}$. In particular, we have $\vhat_i:=v_iv_n^{-(2^i-1)(2^n-1)}$ and, letting $u\in E(n)^2(\cp)$ denote the complex orientation, $\uhat:=v_n^{2^n-1}u$. Let $c$ denote the involution on $E(n)^*(\cp)$ coming from the action of $C_2$ on $\ER(n)$. We show that $c(\uhat)=\uhat^*$ is given by the power series $v_n^{2^n-1}[-1]_{F}(u)$, where $F$ is the formal group law over $E(n)^*$ with $[2]_F(u)=v_0u+_F v_1u^{2}+_F \dots +_Fv_nu^{2^n}$. Under the identification $\cp=BSO(2)$, the product $\uub \in E(n)^*(\cp)$ is $v_n^{2(2^n-1)}$ times the first Pontryagin class. In Proposition \ref{thm:uubar}, we describe a class in $ER(n)^*(\cp)$ which lifts $\uub \in E(n)^*(\cp)$. We denote the lift by $\phat_1$. In $E(n)^*(\cp)$, we also have the sum $\uhat+\ubar$. We show that this lifts to $ER(n)^*(\cp)$ as a power series in $\phat_1$, denoted $\xi(\phat_1)$. We have the main theorem of this paper.

\begin{thm}\label{thm:SES} There is a short exact sequence of modules over $ER(n)^*$
$$0 \longrightarrow \imnr \longrightarrow ER(n)^*(\cp) \longrightarrow \frac{ER(n)^*[[\phat_1]]}{(\xi(\phat_1))}\longrightarrow 0$$
where $\imnr$ is the image of the restricted norm
$$\xymatrix{\ZZ_{(2)}[\vhat_1, \dots, \vhat_{n-1}, v_n^{\pm 2}][[\uub]]\{\uhat, v_n\uhat\} \subset E(n)^*(\cp) \ar@{->}[r]^-{N_*}  & ER(n)^*(\cp)}$$
such that for all $z$, $N_*(z)$ maps to $z+c(z)$ under the map ${ER(n)^*(\cp) \longrightarrow E(n)^*(\cp)}$.
\end{thm}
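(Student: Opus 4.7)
The plan is to compute $ER(n)^*(\cp)$ via the Bockstein spectral sequence
$$E_1^{i,j} = E(n)^{i\lambda(n) + j - i}(\cp) \Rightarrow ER(n)^{j-i}(\cp),$$
and read off the short exact sequence from its $E_\infty$-page. First I would analyze the $d_1$ differential, which on $E_1 = E(n)^*(\cp)$ is governed (up to a shift by a power of $v_n$) by the operator $1 + c$: since $c$ swaps $\uhat$ with $\ubar$ and fixes the $\vhat_i$, the $c$-invariants of $E(n)^*(\cp)$ are generated over the invariant coefficients by $\uub = \phat_1$ and $\uhat + \ubar$, and using the identity $\uhat^* = v_n^{2^n-1}[-1]_{F}(u)$ together with the formal group law, the latter can be rewritten as a specific power series $\xi(\phat_1)$. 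The $E_2$-page therefore splits naturally into a symmetric part built from powers of $\phat_1$ and an anti-symmetric part given by the cokernel of $1 + c$, which will feed into the norm image.

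The next step is to produce permanent cycles on each side. By Proposition \ref{thm:uubar}, $\phat_1$ is an explicit lift of $\uub$ in $ER(n)^*(\cp)$ and hence a permanent cycle; consequently so is $\xi(\phat_1)$, which lifts $\uhat + \ubar$. Independently, for any $z \in E(n)^*(\cp)$ the equivariant norm $N_*(z) \in ER(n)^*(\cp)$ is automatically a permanent cycle with forgetful image $z + c(z)$. Following the strategy outlined in the introduction, I would then check that for $\cp$ all $d_r$ with $r \geq 2$ are confined to the coefficient ring $ER(n)^*$, so the spatial contribution collapses after $d_1$. Hence $E_\infty$ is generated over $ER(n)^*$ by powers of $\phat_1$ and by $N_*$-images. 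To restrict the norm's domain to the subring appearing in the statement, I would use the projection formula $N_*(a) \cdot b = N_*(a \cdot c(b))$ together with $\uhat \cdot \ubar = \phat_1$ to rewrite any $N_*(\uhat^k \cdot w)$ with $k \geq 2$ as a combination of norms on $\uhat$- and $v_n \uhat$-generators multiplied by powers of $\phat_1$ that already lie on the symmetric side, and the even-power restriction on $v_n$ would fall out of the same manipulation.

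With these ingredients the short exact sequence is assembled via the map $ER(n)^*[[\phat_1]]/(\xi(\phat_1)) \to ER(n)^*(\cp)/\imnr$ sending $\phat_1 \mapsto \phat_1$; this is well-defined because $\xi(\phat_1)$ lies in $\imnr$ (it lifts an element in the image of $1 + c$, hence differs from an explicit norm only by an element already in $\imnr$), surjective by the generation statement of the previous paragraph, and injective because any element of $ER(n)^*[[\phat_1]]$ lying in $\imnr$ must have an anti-invariant forgetful image, forcing it to be divisible by $\xi(\phat_1)$. The main obstacles I anticipate are (a) the careful rearrangement of the Bockstein spectral sequence needed to confirm that only $d_1$ contributes at the spatial level for $\cp$, so that the quotient piece really is generated by a single power series variable $\phat_1$, and (b) pinning down the precise domain of the restricted norm, which requires tracking multiplicative relations among $N_*$-images and powers of $\phat_1$ with enough care to identify $\imnr$ exactly rather than a larger or smaller submodule.
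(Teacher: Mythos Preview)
Your outline follows the paper's overall strategy, but there are two substantive gaps and one outright error.

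\textbf{The collapse of higher differentials.} You say you ``would check that for $\cp$ all $d_r$ with $r\ge 2$ are confined to the coefficient ring,'' and you flag this as obstacle~(a). This is the heart of the computation, and the paper does not verify it by hand. Instead, it splits off $\imNr$ as a sub-spectral sequence concentrated on the zero line, writes the quotient $\widetilde{E}_2^{*,*}$ as $E_2^{*,*}(pt)\otimes_{\widehat{E}(n)^*} M$ with $M=\widehat{E}(n)^*[[\uub]]/(\uhat+\ubar)$, and then proves that $M$ is \emph{Landweber flat} (i.e.\ $(\vhat_0,\dots,\vhat_n)$ is a regular sequence on $M$). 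Landweber flatness is exactly what lets homology commute with $-\otimes M$ at each page, so $\widetilde{E}_r^{*,*}=E_r^{*,*}(pt)\otimes M$ for all $r$ and the higher differentials are literally those of the coefficients tensored with the identity. Without this (or an equivalent argument), you have no mechanism for the collapse you need.

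\textbf{The injectivity step is wrong.} You claim that an element of $ER(n)^*[[\phat_1]]$ lying in $\imnr$ ``must have an anti-invariant forgetful image.'' It does not: both $\phat_1^l$ and every $N_*(z)$ forget to $c$-invariant classes in $E(n)^*(\cp)$ (the former to $(\uub)^l$, the latter to $z+c(z)$), so invariance gives no obstruction. The paper's argument is finer: writing $z=\sum_i(a_i\uhat+b_iv_n\uhat)(\uub)^i$ and equating $\N_*(z)$ with a power series in $\uub$, one first uses linear independence of $\{(\uub)^l,\uhat(\uub)^l\}$ to force $b_i=0$, and then a congruence $\uhat+\ubar\equiv \vhat_l(\uub)^{2^l}\bmod(I_l,(\uub)^{2^l+1})$ to force each $a_i$ into $E_\infty^{0,*}(pt)$, after which $N_*(\uhat)=\xi(\phat_1)$ divides $N_*(z)$.

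\textbf{Two smaller points.} First, your description of $\ker d_1$ omits the classes $v_n(\uhat-\ubar)(\uub)^l$; these are $c$-invariant (since $c(v_n)=-v_n$) and generate exactly the part of $\imNr$ coming from $N_*(v_n\uhat)$, so without them you cannot identify the restricted domain correctly. Second, you assert that $\xi(\phat_1)$ ``differs from an explicit norm only by an element already in $\imnr$''; the paper proves the sharper statement $N_*(\uhat)=\xi(\phat_1)$ on the nose in $ER(n)^*(\cp)$, via a degree argument showing no nonzero $x$-multiple can live in the relevant bidegree on $E_\infty$.
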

\begin{remark} Although the middle and right terms of the short exact sequence of Theorem \ref{thm:SES} are rings, the $ER(n)^*$-module $\imnr$ is not an ideal of $ER(n)^*(\cp)$ and the right hand map is not a ring homomorphism. However, we give a complete answer for $ER(n)^*(\cp)$ as an algebra in terms of generators and relations in Theorem \ref{thm:final}. A simpler answer than either Theorem \ref{thm:SES} or \ref{thm:final} is given by restricting to degrees multiples of $2^{n+2}$ (note that $ER(n)$ is $2^{n+2}(2^n-1)$-periodic), though this portion of it contains none of the 2-torsion:
$$ER(n)^{2^{n+2}\ast}(\cp)=ER(n)^{2^{n+2}*}(pt)[[\phat_1]]$$
\end{remark}

\begin{remark} Note that neither the complex orientation $u$ nor $\uhat$ lift to $ER(n)^*(\cp)$, but $\uub$ does. The characteristic class $\xi(\phat_1)$ by which we quotient in the right hand term above is not zero in $ER(n)^*(\cp)$ but is in the image of the norm, $\xi(\phat_1)=N_*(\uhat)$. It has geometric significance as we discuss further in Remark \ref{remark:uhat+ubar}.\end{remark}

\begin{remark}\label{remark:KOintro} In Theorem \ref{thm:SES} for $n=1$, it turns out that $\xi(\phat_1)=-\phat_1$, and so the right hand term reduces to the coefficients $ER(1)^*$. This is not true for $n>1$ (see Remark \ref{remark:KO}). \par 

\end{remark}

In sections \ref{sec:2} and \ref{sec:3} we review the Bockstein spectral sequence and describe the computation for the coefficients. In sections \ref{sec:4} and \ref{sec:5} we begin the computation for $\cp$, identifying the key permanent cycle, $\uub$, and giving a convenient reformulation of $E_1^{*, *}$. In section \ref{sec:6} we compute $E_2^{*, *}$. From there, sections \ref{sec:7}-\ref{sec:9} break up the Bockstein spectral sequence into a short exact sequence of spectral sequences and show that the remainder of the computation happens in the coefficients via a Landweber flatness argument. In section \ref{sec:10}, we prove Theorem \ref{thm:SES}, describe the multiplicative structure, and state the most explicit form of the answer as an algebra over $ER(n)^*$. Finally, section \ref{sec:11} describes the very clean form of the answer that occurs after a certain completion. The appendix at the end contains some key equivariant lemmas necessary for our computations.

This paper forms part of the author's thesis \cite{Lor16}.

\textbf{Acknowledgements:} This work would not be possible without the patience, support, and enthusiasm of my advisor Nitu Kitchloo. I am also deeply grateful to Steve Wilson for his interest in this project and insightful comments. Finally, I am grateful for the comments and suggestions of an anonymous referee.

\section{$\ER(n)$ and the Bockstein spectral sequence} \label{sec:2}
We begin by reviewing some facts about $\ER(n)$ we need to set up our computational machinery. Let $\alpha$ denote the sign representation of $C_2$. By a genuine $C_2$-equivariant spectrum $\mathbb{E}$, we mean a collection of spaces $\mathbb{E}_V$ ranging over finite-dimensional $C_2$-representations $V=s+t\alpha$ together with a transitive system of based $C_2$-equivariant homeomorphisms
$$\mathbb{E}_V \longrightarrow \Omega^{W-V}\mathbb{E}_W, \ind \text{for }V \subseteq W.$$
Such spectra represent bigraded cohomology theories $\mathbb{E}^\star(-)$ given by 
$$\mathbb{E}^{s+t\alpha}(-)=[-, \Sigma^{s+t\alpha}\mathbb{E}]^{C_2}.$$
The $C_2$-action means there is an involution on $\mathbb{E}$. Letting $\iota^*\mathbb{E}$ denote the underlying nonequivariant spectrum, there is an induced involution on the (nonequivariant) cohomology groups $(\iota^*\mathbb{E})^*(-)$. This is the same $c$ described in the introduction.

The $\ER(n)$ are genuine $C_2$-equivariant spectra. We draw attention to two classes in the coefficients, $\ER(n)^\star(pt)=\ER(n)^\star$. As shown in ~\cite{KW07}, there is an invertible class $y(n) \in \ER(n)^{-\lambda(n)-\alpha}$. It restricts to the (nonequivariant) class $v_n^{2^n-1} \in E(n)^{-\lambda(n)-1}$. Additionally, there is a class $x(n) \in \ER(n)^{-\lambda(n)}=ER(n)^{-\lambda(n)}$ with $x(n)^{2^{n+1}-1}=0$. Henceforth, we drop the `$n$' and simply write $x, y,$ and $\lambda$.

In \cite{KW07}, Kitchloo and Wilson construct the fibration
$$\begin{CD}\Sigma^{\lambda}ER(n) @>x>> ER(n) @>>> E(n)\end{CD}.$$
Applying  $[X, -]$ yields an exact couple
$$\xymatrix{
ER(n)^*(X) \ar@{->}[rr]^-{x}
&&ER(n)^*(X) \ar@{->}[ld] \\
&E(n)^*(X) \ar@{->}[lu]}$$

and produces the Bockstein spectral sequence (BSS). 

\begin{remark} Depending on whether one truncates the multiplication-by-$x$ tower, there are two spectral sequences that can arise from the above. One converges to $ER(n)^*(X)$ (as in \cite{KW14}), the other to $0$ (as in \cite{KW08a}, \cite{KW08b}, and \cite{KLW16b}). In the latter case, one must go back to reconstruct the answer from the differentials. Both have their advantages and ultimately contain equivalent information, but it is the truncated BSS converging to $ER(n)^*(X)$ that we use in this paper.
\end{remark}

The BSS has the following properties.

\begin{thm} \label{thm:bss}\cite{KW14}
\leavevmode

\begin{enumerate}[(i)]
\item There is a first and fourth quadrant spectral sequence of $ER(n)^*$-modules, $E_r^{i, j} \Rightarrow ER(n)^{j-i}(X)$. The differential $d_r$ has bidegree $(r, r+1)$ for $r \geq 1$.
\item The $E_1$-term is given by
$$E_1^{i, j}=E(n)^{i\lambda +j-i}(X)$$
with
$$d_1(z)=v_n^{1-2^n}(1-c)(z)$$
where $c(v_i)=-v_i$. The differential $d_r$ increases cohomological degree by $1+r\lambda$ between the appropriate subquotients of $E(n)^*(X)$.
\item $E_{2^{n+1}}(X)=E_\infty(X)$, which is described as follows. Filter $M=ER(n)^*(X)$ by $M_r=x^rM$ so that
$$M=M_0 \supset M_1 \supset M_2 \supset \dots \supset M_{2^{n+1}-1}=\{0\}.$$
Then $E_\infty^{r, *}(X)$ is canonically isomorphic to $M_r/M_{r+1}$.
\item $d_r(ab)=d_r(a)b+c(a)d_r(b)$. In particular, if $c(z)=z \in E_r(X)$ then $d_r(z^2)=0$, $r>1$.
\end{enumerate}
\end{thm}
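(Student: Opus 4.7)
The plan is to construct the Bockstein spectral sequence from the exact couple obtained by applying $[X, -]^\ast$ to the cofibre sequence $\Sigma^{\lambda} ER(n) \xrightarrow{x} ER(n) \to E(n)$. This yields a couple with $A^\ast = ER(n)^\ast(X)$, $B^\ast = E(n)^\ast(X)$, $i$ multiplication by $x$, $j$ the reduction, and $k$ the connecting homomorphism of cohomological degree $\lambda + 1$. Parts (i) and (iii) are formal consequences; parts (ii) and (iv) require the specific equivariant structure from \cite{KW07}.

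For (i), the standard unrolling of the exact couple produces the spectral sequence, with indexing $(p, q)$ chosen so that $p$ is the $x$-filtration and $q - p$ is the cohomological degree. A direct degree-count on the formula $d_r = j \circ i^{-(r-1)} \circ k$ yields the claimed $(r, r+1)$ bidegree, and nonnegativity of the $x$-filtration gives the first-and-fourth-quadrant shape. For (iii), nilpotence $x^{2^{n+1}-1} = 0$ forces $M_{2^{n+1}-1} = 0$; hence the required division by $x^{r-1}$ in $d_r$ outstrips the nilpotence bound once $r \geq 2^{n+1}$, so $d_r = 0$ and $E_{2^{n+1}} = E_\infty$. The identification $E_\infty^{r, \ast} \cong M_r / M_{r+1}$ is the standard associated-graded statement for a bounded filtration.

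For (ii), the equality $E_1 = B^\ast = E(n)^\ast(X)$ is immediate, and by naturality it suffices to establish $d_1 = v_n^{1-2^n}(1 - c)$ on coefficients. There the composition $j \circ k : E(n)^\ast \to E(n)^{\ast + \lambda + 1}$ is a specific operation encoding the passage between the fixed-point and underlying spectra of $\ER(n)$; comparing it against the explicit description of $ER(n)^\ast$ and the class $y \in \ER(n)^{-\lambda - \alpha}$ from \cite{KW07} identifies this operation with $v_n^{1-2^n}(1 - c)$, since any such composition is forced by the $C_2$-action to have the form (unit)$\cdot(1 - c)$ and the unit is pinned down by degree-matching. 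Multiplicativity and naturality then extend the formula to $B^\ast(X)$.

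The main obstacle is (iv), the Leibniz rule with $c$-twist. The cofibre sequence is a sequence of $ER(n)$-module spectra, so $d_1$ should behave like a derivation over $ER(n)^\ast$; however, the ring multiplication on $E(n)^\ast(X)$ is inherited from the $C_2$-equivariant ring $\ER(n)$, and consequently the left and right $ER(n)$-actions on $E(n)$ are related by the involution $c$. Tracking this asymmetry through the formula $d_r = j \circ i^{-(r-1)} \circ k$ produces the twisted identity $d_r(ab) = d_r(a)b + c(a) d_r(b)$ valid on every page. The corollary $d_r(z^2) = 0$ for $c$-fixed $z$ and $r > 1$ then follows from this identity together with the fact that $E_r$ is 2-torsion for $r \geq 2$: for any $c$-invariant $w$, the class $v_n^{2^n - 1} w$ is $c$-antiinvariant, so $2w = v_n^{1-2^n}(1 - c)(v_n^{2^n - 1} w) = d_1(v_n^{2^n - 1} w)$ already lies in the image of $d_1$. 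Formalizing the $c$-twist cleanly at the spectrum level, using the explicit construction of $x$ as an equivariant class in \cite{KW07}, is the principal technical challenge; it is where the equivariant origin of the spectral sequence genuinely intrudes on the proof.
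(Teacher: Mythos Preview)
The paper does not prove this theorem; it is stated with attribution to \cite{KW14} and used as a black box. There is therefore no proof in the present paper to compare your proposal against. Your outline is a reasonable sketch of the standard construction---unrolling the exact couple associated to the fibration $\Sigma^{\lambda}ER(n)\xrightarrow{x}ER(n)\to E(n)$, reading off the $E_1$-term and the filtration-by-$x$ identification of $E_\infty$, and invoking nilpotence of $x$ for the collapse at $E_{2^{n+1}}$---and this is indeed the approach taken in the cited reference. The one place where your sketch is genuinely thin is (iv): you correctly flag the $c$-twisted Leibniz rule as the point where the equivariant origin of the sequence matters, but ``tracking the asymmetry through $d_r=j\circ i^{-(r-1)}\circ k$'' is not yet an argument. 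A clean proof requires either an explicit equivariant model for the boundary map or the observation (used in \cite{KW07, KW14}) that $d_1$ agrees with a Milnor operation intertwining with $c$ in the stated way, after which the higher $d_r$ inherit the rule formally.
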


\begin{remark} As in \cite{KW14}, we note that when $X$ is a space, there is a canonical class in $E_1^{1, 1-\lambda}$ that corresponds to $1 \in E(n)^{\lambda+1-\lambda-1}(X)=E(n)^0(X)$ and is a permanent cycle representing $x\in ER(n)^{-\lambda}$. We abuse notation and give its representative in $E_1^{1, 1-\lambda}$ the name $x$ as well. Note that though $x$ is a permanent cycle, $x^{2^{n+1}-1}$ does not survive the spectral sequence and is equal to zero in $ER(n)^*(X)$.  We may rewrite the $E_1$-page to index the vertical lines by powers of $x$:
$$E_1^{*, *}=E_1^{0, *}[x]=E(n)^*(X)[x]$$
$$d_1(z)=v_n^{1-2^n}(1-c)(z)x, \ind v_n \in E_1^{0, 2(1-2^n)}$$
\end{remark}
\begin{remark} To make things even more confusing, the representative of $x=x(n)$ in $E_1^{1, 1-\lambda}$ was previously called $y$ in \cite{KW14} and is not the same as $y(n)\in  \ER(n)^{-\lambda(n)-\alpha}$ as described above. Since our $x \in E_1^{1, 1-\lambda}$ represents $x=x(n) \in ER(n)^{-\lambda}$, we choose the lesser of two evils and henceforth use our notation instead.
\end{remark}


\section{The spectral sequence for $X=pt$} \label{sec:3}
When $X=pt$, we have
$$E_1^{*, *}=E(n)^*=\ZZ_{(2)}[v_1, \dots, v_{n-1}, v_n^{\pm 1}][x], \ind |v_k|=-2(2^k-1).$$
None of the generators $v_k$ are permanent cycles as $c(v_k)=-v_k$. However, there is a trick we can do to replace $v_k$ for $k<n$ by permanent cycles $\vhat_k$. As in~\cite{HK01}, each class $v_k \in MU^{-2(2^k-1)}$ has an equivariant lift in $\MR^{-(2^k-1)(1+\alpha)}$. For $0 \leq k \leq n$, the $\MR$-algebra structure on $\ER(n)$ produces these classes in $\ER(n)^\star$. We may use $y \in \ER (n)^{-\alpha-\lambda}$ to shift the ``diagonal'' $v_k$ classes to integer grading. For $0 \leq k < n$, let $\vhat_k \in \ER (n)^{(2^k-1)(\lambda-1)}=ER(n)^{(2^k-1)(\lambda-1)}$ denote $v_ky^{-(2^k-1)}$. Then by construction, this class restricts to $v_kv_n^{-(2^k-1)(2^n-1)} \in E(n)^{(2^k-1)(\lambda-1)}$ and represents a permanent cycle in $E_1^{0, (2^k-1)(\lambda-1)}$. 

We have now shifted all of the differentials onto powers of $v_n$. Let $R_n=\ZZ_{(2)}[\vhat_1, \dots, \vhat_{n-1}]$, $I_j=(2, \vhat_1, \dots, \vhat_{j-1})$, and $I_0=(0)$. The Bockstein spectral sequence computing $ER(n)^*$ goes as follows.

\begin{thm} \cite{KW14} In the spectral sequence $E_r(\text{pt}) \Rightarrow ER(n)^*$,

\begin{enumerate}[(i)]
\item 
$$E_1^{*, *} \cong \ZZ_{(2)}[\vhat_1, \vhat_2, \dots, \vhat_{n-1}, v_n^{\pm 1}][x]$$
That is,
$$E_1^{m, *}=\ZZ_{(2)}[\vhat_1, \dots, \vhat_{n-1}, v_n^{\pm 1}] \text{  on  } x^m.$$
\item The only non-zero differentials are generated by
$$d_{2^{k+1}-1}(v_n^{-2^k})=\vhat_kv_n^{-2^{n+k}}x^{2^{k+1}-1} \text{  for  } 0 \leq k \leq n.$$
\item $E_{2^k}^{*, *}=E_{2^k+1}^{*, *}= \dots =E_{2^{k+1}-1}^{*, *}$, for $0 \leq k \leq n$, and $E_{2^{n+1}}^{*, *}=E_\infty^{*, *}$.
\item For $0 \leq j < k \leq n+1$,
$$E_{2^k}^{m, *}=R_n[v_n^{\pm 2^k}]/I_j \bigoplus_{j<i<k} I_iR_n[v_n^{\pm 2^{i+1}}]v_n^{2^i}/I_j \text{  on  } x^m$$
when $2^j-1\leq m < 2^{j+1}-1$.
\item For $0 < k \leq n+1$ and $2^k-1 \leq m$,
$$E_{2^k}^{m, *}=R_n[v_n^{\pm 2^k}]/I_k \text{  on  } x^m.$$
\end{enumerate}

\end{thm}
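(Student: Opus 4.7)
My plan is to proceed by induction on $k$, computing the differentials $d_{2^{k+1}-1}$ one at a time and reading off each successive page of the spectral sequence.

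Part (i) is immediate from Theorem 2.2(ii) applied to $X=pt$: we have $E_1^{*,*} = E(n)^*[x] = \ZZ_{(2)}[v_1,\ldots,v_{n-1},v_n^{\pm 1}][x]$, and the equivariant lifts $\vhat_k = v_k y^{-(2^k-1)}$ for $0 \le k < n$ are permanent cycles as recalled in the preamble, so we may freely replace each $v_k$ by $\vhat_k$ in the generators. For $d_1$, I would apply the formula $d_1(z) = v_n^{1-2^n}(1-c)(z)x$ together with $c(v_n) = -v_n$ to obtain $d_1(v_n^{-1}) = 2 v_n^{-2^n} x = \vhat_0 v_n^{-2^n} x$, which is the $k=0$ case of (ii) (setting $\vhat_0 = 2$).

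For the inductive step, suppose we have reached $E_{2^k}$ with the description in (iv)-(v). The lowest surviving negative power of $v_n$ on the zero column is $v_n^{-2^k}$, because $c(v_n^{2^k}) = v_n^{2^k}$ forces $v_n^{\pm 2^k}$ into the kernel of every previous differential by the Leibniz formula in Theorem 2.2(iv). A bidegree check rules out any $d_r$ with $2^k \le r < 2^{k+1}-1$ connecting surviving classes, which gives the collapse statement (iii); the next nontrivial differential is therefore $d_{2^{k+1}-1}$. Its target $\vhat_k v_n^{-2^{n+k}} x^{2^{k+1}-1}$ is pinned down using the equivariant $2$-series of the $\BPR$-orientation: the relation $[2]_F(u) = v_0 u +_F v_1 u^2 +_F \cdots +_F v_n u^{2^n}$ forces successive Bocksteins to produce exactly the coefficients $\vhat_k$ stage by stage. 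Parts (iv) and (v) then follow by computing kernel modulo image column by column: $d_{2^{k+1}-1}$ introduces $\vhat_k$ into the annihilator ideal on columns $x^m$ with $m \ge 2^{k+1}-1$, enlarging $I_k$ to $I_{k+1}$, while leaving the $I_j$-torsion surviving from earlier stages intact on the columns with $2^j-1 \le m < 2^{j+1}-1$. The spectral sequence collapses at $E_{2^{n+1}}$ because $y^{2^{n+1}}$ is an equivariant lift of $v_n^{2^{n+1}}$, making this power a genuine permanent cycle and leaving no room for further differentials.

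The main obstacle is the precise identification of $d_{2^{k+1}-1}(v_n^{-2^k})$ for $k \ge 1$: these are higher secondary differentials, not determined formally from $d_1$ and the Leibniz rule. The most conceptual route is to compare with the corresponding Bockstein spectral sequence for $\BPR$, where the differentials are governed by the universal $[2]$-series in $\MR$-cohomology and match onto $\ER(n)$ via the Real orientation. This is the approach carried out in Kitchloo and Wilson \cite{KW14}, whose result we are quoting, and reducing to that calculation is the technical heart of the argument.
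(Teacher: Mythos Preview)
The paper does not prove this theorem at all: it is stated with attribution to \cite{KW14} and no argument is given beyond the remark on periodicity that follows. Your proposal is a reasonable sketch of how such a proof would go, and you yourself correctly identify in the final paragraph that the actual work is carried out in Kitchloo--Wilson \cite{KW14}; so there is no discrepancy to flag, since both you and the paper are ultimately deferring to that reference.
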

\begin{remark} Note that $v_n$ does not survive the spectral sequence, but $v_n^{2^{n+1}}$ does. Since it is invertible, this makes $ER(n)$ periodic with period $|v_n^{-2^{n+1}}|=2^{n+2}(2^n-1)$. \end{remark}


\section{The spectral sequence for $X=\cp$} \label{sec:4}
The BSS for $\cp$ starts with
$$E_1^{\ast, \ast}=E(n)^*(\cp)[x]=E(n)^*[[u]][x].$$
Again, we hat off $v_i$, $0 \leq i <n$ so that $E(n)^*=\ZZ_{(2)}[\vhat_1, \dots, \vhat_{n-1}, v_n^{\pm 1}]$. As described in the introduction, in general, for a class $z \in E(n)^{2j}(X)$, we set 
$$\widehat{z}:=v_n^{j(2^n-1)}z\in E(n)^{j(1-\lambda)}(X).$$
However, note that for arbitrary $z$, $\widehat{z}$ need not be a permanent cycle. In fact, $\uhat=v_n^{2^n-1}u \in E(n)^{1-\lambda}(\cp)$ is not. In any case, we replace $u$ by $\uhat$ as the power series generator of $E(n)^*(\cp)$, which is valid since $v_n$ is a unit. We may similarly hat off the coefficients of the formal group law so that $\widehat{F}(\uhat_1, \uhat_2)$ is a homogenous expression of degree $1-\lambda$ and satisfies $v_n^{2^n-1}F(u_1, u_2)=\widehat{F}(\uhat_1, \uhat_2)$.

We now have
$$E_1^{\ast, \ast}=\ZZ_{(2)}[\vhat_1, \dots, \vhat_{n-1}, v_n^{\pm 1}][[\uhat]][x]$$
with the following bidegrees:
$$|\widehat{v_k}|=\left(0, \frac{1-\lambda}{2}|v_k|\right)=(0, (\lambda-1)(2^k-1)), \ind |v_n|=(0, -2(2^n-1))$$
$$|\widehat{u}|=(0, 1-\lambda), \ind |x|=(1, 1-\lambda)$$

To compute $d_1$, by Theorem \ref{thm:bss}(ii), we need the action of $c$ on $E_1^{*, *}$. The classes $\vhat_k$, $0 \leq k <n$, as well as $x$ are permanent cycles and in particular have trivial $c$-action. We have $c(v_n)=-v_n$. It remains to identify $c(\uhat)$.

\begin{lemma}\label{lemma:cuhat}
$$c(\widehat{u})=[-1]_{\widehat{F}}(\widehat{u})$$
\end{lemma}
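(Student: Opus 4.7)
The plan is to reduce the identity to the fact that complex conjugation on $\cp=BU(1)$ sends a line bundle to its inverse, which in complex-oriented cohomology corresponds to formal inversion. The key technical observation is that $\widehat{u}$ admits a Real (equivariant) lift sitting in \emph{integer} grading, so that equivariance descends cleanly through the forgetful map without any sign corrections coming from representation spheres.

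First I would produce the lift. Write $\tau\colon \cp\to\cp$ for the complex-conjugation self-map and let $\cp$ carry the $C_2$-action generated by $\tau$. From the $\MR$-orientation inherited by $\ER(n)$, there is a Real orientation $u_\RR\in \ER(n)^{1+\alpha}(\cp)$ whose underlying nonequivariant class is $u$. Multiplying by the invertible Real class $y\in \ER(n)^{-\lambda-\alpha}$ produces
$$\widehat{u}_\RR := y\cdot u_\RR \;\in\; \ER(n)^{1-\lambda}(\cp),$$
a lift of $\widehat{u}$ sitting in trivial-representation grading.

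Next I would convert the equivariance of $\widehat{u}_\RR$ into a concrete identity. Because $\widehat{u}_\RR$ is represented by an equivariant map $\cp\to\Sigma^{1-\lambda}\ER(n)$ whose target is suspended only by a trivial representation, there is no sign twist from the representation sphere to worry about when one forgets equivariance. The equivariance condition on $\widehat{u}_\RR$ therefore descends to the identity of nonequivariant maps
$$c\circ\widehat{u} \;=\; \widehat{u}\circ\tau\colon\cp\longrightarrow \Sigma^{1-\lambda}E(n),$$
where $c$ is the spectrum-level involution on $E(n)=\iota^*\ER(n)$. Cohomologically this reads $c(\widehat{u})=\tau^*(\widehat{u})$.

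Finally I would evaluate the pullback $\tau^*$. The self-map $\tau$ classifies the conjugate line bundle $L\mapsto L^{-1}$, so in any complex-oriented theory $\tau^*(u)=c_1(L^{-1})=[-1]_F(u)$. The coefficient $v_n^{2^n-1}$ lies in $E(n)^*$ and is untouched by $\tau^*$, hence
$$c(\widehat{u})=\tau^*(v_n^{2^n-1}u)=v_n^{2^n-1}[-1]_F(u)=[-1]_{\widehat{F}}(\widehat{u}),$$
the last equality being the defining relation $v_n^{2^n-1}F(u_1,u_2)=\widehat{F}(\widehat{u}_1,\widehat{u}_2)$ applied to the formal inverse. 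The main obstacle is justifying the second step rigorously—verifying that the integer-graded equivariance of $\widehat{u}_\RR$ truly yields $c(\widehat{u})=\tau^*(\widehat{u})$ with no hidden signs—and this is precisely the kind of equivariant bookkeeping I would expect the paper's appendix to supply.
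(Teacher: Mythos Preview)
Your argument is correct and lands on the same identity, but you route around the sign rather than through it. The paper works directly with the un-hatted orientation $u$ as an equivariant map $\cp\to S^{1+\alpha}\wedge E(n)$; the involution on $S^{1+\alpha}$ contributes a $-1$, so the commuting square yields $c(u)=-[-1]_F(u)$, and then the odd power in $c(v_n^{2^n-1})=-v_n^{2^n-1}$ cancels that sign to give $c(\widehat{u})=[-1]_{\widehat{F}}(\widehat{u})$. Your approach instead multiplies by the invertible class $y$ first, so that the equivariant lift $\widehat{u}_\RR$ already lives in purely integer grading $1-\lambda$; the suspension is by a trivial representation, equivariance descends to $c(\widehat{u})=\tau^*(\widehat{u})$ with no sign, and $\tau^*$ is $E(n)^*$-linear so $v_n^{2^n-1}$ passes through untouched. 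Your route is a bit cleaner conceptually (it explains \emph{why} the hatted classes are the right ones to track), while the paper's route makes the cancellation of the two signs visible. Your closing worry about hidden signs is unfounded---integer-graded suspensions really do carry trivial $C_2$-action on the sphere coordinate---and the appendix you point to is actually about the norm, not this issue.
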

\begin{proof} We view $u$ as an equivariant map $u: \cp \longrightarrow \Sigma^{1+\alpha}E(n)$. The diagram
$$\xymatrix{
\cp \ar@{->}^-{\inv}[d] \ar@{->}^-u[r] & S^{1+\alpha} \wedge E(n) \ar@{->}^-{(-1) \wedge c}[d]\\
\cp \ar@{->}^-u[r] & S^{1+\alpha} \wedge E(n)
}$$
commutes, where $\inv$ denotes the involution on $\cp$ classifying the conjugate line bundle with $\inv^*(u)=[-1]_F(u)$. The above diagram shows that $c(u)=-[-1]_F(u)$. Then on $\uhat=v_n^{2^n-1}u$, we have
$$c(\uhat)=c(v_n^{2^n-1}u)=-v_n^{2^n-1}c(u)=v_n^{2^n-1}[-1]_F(u)=[-1]_{\widehat{F}}(\uhat).$$
\end{proof}

From now on, we let $\ubar$  denote $c(\uhat)=[-1]_{\widehat{F}}(\uhat)$. For future reference, it will be helpful to have some terms of this power series, so we pause to derive some formulas. 

\begin{lemma}\label{lemma:ubarcong} We have the following congruences in $E(n)^*(\cp)$: 
\begin{align*}
\ubar &\equiv -\uhat \text{ mod }(\uhat^2)\\
\ubar &\equiv \uhat + \vhat_k \uhat^{2^k} \text{ mod }(\vhat_0, \dots, \vhat_{k-1}, \uhat^{2^k+1}) \ind \text{ for }0 < k < n
\end{align*}
\end{lemma}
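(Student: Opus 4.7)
Both congruences follow from the defining relation $F(u, [-1]_F(u)) = 0$, together with the identity stated in the introduction that on $E(n)^*$ one has $[2]_F(u) = v_0 u +_F v_1 u^2 +_F \cdots +_F v_n u^{2^n}$, which yields the standard congruence $[2]_F(u) \equiv v_k u^{2^k} \pmod{(I_k, u^{2^k+1})}$ for $I_k = (2, v_1, \ldots, v_{k-1})$.

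The first congruence is immediate: every formal group law satisfies $[-1]_F(u) = -u + O(u^2)$, so multiplying by $v_n^{2^n-1}$ gives $\ubar = v_n^{2^n-1}[-1]_F(u) = -\uhat + O(\uhat^2)$.

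For the second congruence, write $[-1]_F(u) = -u + h(u)$ with $h(u) = O(u^2)$, and expand $F(u, -u + h(u)) = 0$ using $F(x, y) = x + y + \sum_{i, j \geq 1} a_{ij} x^i y^j$ to obtain the fixed-point equation
$$h(u) = -\sum_{i, j \geq 1} a_{ij} u^i (-u + h(u))^j.$$
I would prove by induction on $m$ that, working modulo $I_k$, the coefficient of $u^m$ in $h(u)$ is $0$ for $2 \le m < 2^k$ and equals $v_k$ for $m = 2^k$. In the inductive step, expanding $(-u + h(u))^j = (-u)^j + j(-u)^{j-1}h(u) + \cdots$ and using the hypothesis $h = O(u^m)$ mod $I_k$ shows that every $h$-dependent term contributes only in degrees $\ge i + j + m - 1 \ge m + 1$. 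So the $u^m$-contribution modulo $I_k$ comes entirely from $\sum_{i+j = m}(-1)^{j+1} a_{ij}$, which reduces modulo $2$ (and hence modulo $I_k$) to the $u^m$-coefficient of $[2]_F(u) - 2u$. By the known form of $[2]_F(u)$ modulo $I_k$, this is $0$ for $m < 2^k$ and $v_k$ for $m = 2^k$. Since signs are trivial modulo $I_k$, this yields $[-1]_F(u) \equiv u + v_k u^{2^k} \pmod{(I_k, u^{2^k+1})}$.

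Finally, multiplying by $v_n^{2^n-1}$ and passing to hatted variables gives the claim: a direct exponent check shows $v_n^{2^n-1} \cdot v_k \cdot u^{2^k} = \vhat_k \uhat^{2^k}$ (using $(2^n-1) + (2^k-1)(2^n-1) - 2^k(2^n-1) = 0$), and the ideal $(\vhat_0, \ldots, \vhat_{k-1})$ agrees with $(v_0, \ldots, v_{k-1}) = I_k$ since each $\vhat_i$ differs from $v_i$ by a power of $v_n$, a unit in $E(n)^*$, with $\vhat_0 = v_0 = 2$. The main technical content is the degree-by-degree induction for $h(u)$, which pins down both that no lower-degree terms survive modulo $I_k$ and that the leading term matches $v_k u^{2^k}$; all remaining steps are bookkeeping.
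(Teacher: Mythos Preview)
Your argument is correct. Both congruences do follow from the recursion you set up, and your induction on $m$ goes through: the $h$-dependent terms in $(-u+h(u))^j$ land in degree $\ge i+j+m-1\ge m+1$ modulo $I_k$, so the $u^m$-coefficient of $h$ modulo $I_k$ is $\sum_{i+j=m}(-1)^{j+1}a_{ij}\equiv\sum_{i+j=m}a_{ij}$, which is the $u^m$-coefficient of $[2]_F(u)$. The conversion to hatted variables is also fine.

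That said, the paper's route is considerably shorter, and it is worth seeing why. Rather than expanding $\uhat+_{\widehat F}\ubar=0$ and running an induction, the paper uses the equivalent identity
\[
\ubar +_{\widehat F} [2]_{\widehat F}(\uhat)=\uhat,
\]
which is just $[-1]+_F[2]=[1]$. Now the known form of the $2$-series does all the work at once. Modulo $\uhat^2$ one has $[2]_{\widehat F}(\uhat)\equiv 2\uhat$, so $\ubar+2\uhat\equiv\uhat$ gives the first congruence. Modulo $(\vhat_0,\dots,\vhat_{k-1},\uhat^{2^k+1})$ one has $[2]_{\widehat F}(\uhat)\equiv\vhat_k\uhat^{2^k}$; since $\ubar=O(\uhat)$ the cross terms in $\widehat F(\ubar,\vhat_k\uhat^{2^k})$ are $O(\uhat^{2^k+1})$, so $\ubar+\vhat_k\uhat^{2^k}\equiv\uhat$ and the second congruence drops out (using $2\in I_k$ to fix the sign). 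Your induction is effectively rediscovering, term by term, the statement that $[2]_F(u)\equiv v_k u^{2^k}$ modulo $(I_k,u^{2^k+1})$; the paper simply invokes it. The payoff of the paper's formulation is that no recursion is needed and the argument is two lines.
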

\begin{proof} Both follow from the formula for the $2$-series
$$[2]_{\widehat{F}}(\uhat)={\sum_{i=0}^n}_{\widehat{F}}\vhat_i\uhat^{2^i}$$
and the equation 
$$\ubar+_{\widehat{F}}[2]_{\widehat{F}}(\uhat)=\uhat.$$
\end{proof}


\section{A topological basis for $E_1^{*,*}$} \label{sec:5}
The next step in computing $d_1$ is finding a convenient topological basis for $E_1^{*, *}$. To that end, we identify a large collection of permanent cycles in our spectral sequence. Note that $E_1^{*, *}$ is a power series ring over $E(n)^*[x]$, and throughout, by a basis for $E_1^{*, *}$ we mean a topological basis.

\begin{prop} \label{thm:uubar} $\uhat \ubar$ is a permanent cycle\end{prop}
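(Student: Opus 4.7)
The first observation is that $\uhat \ubar$ is $c$-invariant: since $c$ is a ring involution, $c(\uhat \ubar) = c(\uhat) c(\ubar) = \ubar \cdot \uhat = \uhat \ubar$. By Theorem \ref{thm:bss}(ii), $d_1(\uub) = v_n^{1-2^n}(1-c)(\uub)\, x = 0$, so $\uub$ survives to $E_2$.

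Pushing this to a permanent-cycle statement by iterating Leibniz is not feasible: the factors $\uhat$ and $\ubar$ are not $c$-invariant, so already $d_1(\uhat), d_1(\ubar) \neq 0$ and neither factor survives to $E_2$ to be used on later pages. My plan is instead to exhibit an explicit integer-graded lift $\phat_1 \in \ER(n)^{2(1-\lambda)}(\cp)$ whose image under the restriction $\ER(n)^*(\cp) \to E(n)^*(\cp)$ is $\uub$. Any such lift is automatically a permanent cycle, as it sits on the zeroth column of the $x$-filtration. Geometrically this is plausible: under $\cp = BU(1) = BSO(2)$, the product $u \cdot c(u) = -u \cdot [-1]_F(u) = -c_1(L) c_1(\bar L) = -c_2(L \oplus \bar L)$ equals the first Pontryagin class of the underlying real $2$-plane bundle of the tautological line bundle $L$, and $\uub = v_n^{2(2^n-1)} u \, c(u)$ is just a $v_n$-shifted version of it, sitting in integer cohomological degree $2(1-\lambda)$.

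The main obstacle is the actual construction of the lift in the equivariant setting. A naive equivariant square $u^2 \in \ER(n)^{2(1+\alpha)}(\cp)$ does \emph{not} suffice: it restricts to $u^2 \in E(n)^4(\cp)$, not to $u \, c(u)$. One genuinely needs a norm-type equivariant construction converting the Real orientation $u \in \ER(n)^{1+\alpha}(\cp)$ into an integer-graded class whose restriction is $u \, c(u)$. This is presumably the content of the key equivariant lemmas promised in the appendix. Once such a $\phat_1$ is produced and verified to restrict to $\uub$, the permanent-cycle conclusion is immediate, since any class in the image of restriction is represented canonically on the zeroth line of the BSS.
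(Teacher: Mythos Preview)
Your overall strategy is correct and matches the paper's: exhibit an explicit class in $ER(n)^{2(1-\lambda)}(\cp)$ restricting to $\uub$, whence $\uub$ is a permanent cycle on the zero line. Your Pontryagin-class intuition is exactly right and is what the paper records in Remark~\ref{remark:pontryagin}.

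However, there is a genuine gap. You defer the construction of the lift to ``the key equivariant lemmas promised in the appendix,'' but the appendix does \emph{not} provide this. The appendix constructs the \emph{additive} transfer $N_*$, with $\N_*(z)=z+c(z)$; applied to $\uhat$ it lifts $\uhat+\ubar$, not $\uhat\ubar$. A multiplicative norm of the sort you gesture at (taking the Real orientation $u\in\ER(n)^{1+\alpha}(\cp)$ to an integer-graded class restricting to $u\cdot c(u)$) is not developed anywhere in the paper, and making that precise would require substantial additional input.

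The paper's actual construction is different and more concrete. It begins with $\ER(n)^\star(BU(2))=\ER(n)^\star[[c_1,c_2]]$ (computed via the Real Atiyah--Hirzebruch spectral sequence, as in \cite{HK01}), so the Real second Chern class $c_2$ already exists equivariantly in degree $2(1+\alpha)$. Hatting with the invertible class $y$ gives $\widehat{c}_2=c_2y^2\in\ER(n)^{2(1-\lambda)}(BU(2))$, a class in purely integer degree. One then passes to fixed points and restricts along $BSO(2)\to BU(2)$; a small diagram chase (using that $BU(1)\to BU(2)$ factors through the diagonal followed by $1\times c$) identifies the resulting class in $E(n)^*(BSO(2))$ with $\uub$. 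No norm machinery is needed: the lift comes from the equivariant Chern class on $BU(2)$, not from any operation applied to $u$ itself.
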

\begin{proof}
Our starting point is $\mathbb{ER}(n)^\star(BU(2))$. It may be computed using the Real Atiyah-Hirzebruch spectral sequence completely analogously to the complex-oriented case (see \cite{HK01}). We have
$$\mathbb{ER}(n)^\star(BU(2))=\mathbb{ER}(n)^\star[[c_1, c_2]]$$
with $|c_i|=i(1+\alpha)$. We hat $c_2$ to produce $\widehat{c}_2=c_2y^2$ in degree $2(1-\lambda)+0\alpha$ which restricts to $c_2v_n^{2(2^n-1)} \in E(n)^*(BU(2))$. When we take fixed points to land in $ER(n)^*(BO(2))$ and map over to $ER(n)^*(BSO(2))=ER(n)^*(\cp)$, we claim this will produce a permanent cycle which lifts $\uub \in E(n)^*(\cp)$. That is, consider the following commutative diagram:
$$\xymatrix{
[BU(2), \ER(n)]^{C_2} \ar@{->}[d]^-{} \ar@{->}[r] & [BSO(2), \ER(n)]^{C_2} \ar@{=}[d] \\
[BU(2), \ER(n)] \ar@{=}[d] & [BSO(2), ER(n)] \ar@{->}[d]\\
[BU(2), E(n)] \ar@{->}[r] &[BSO(2), E(n)]}
$$
Here the two horizontal maps are induced by the inclusion $BSO(2) \longrightarrow BU(2)$. From the diagram, we conclude that the image of $\widehat{c}_2 \in \mathbb{ER}(n)^\star(BU(2))$ in $ER(n)^*(BSO(2))$ is a permanent cycle, whose representative on $E_1$ is given by mapping to the bottom right corner. Since $\widehat{c}_2$ restricts to $c_2v_n^{2(2^n-1)}$ in $E(n)^*(BU(2))$, it remains to show that the image of this class in $E(n)^*(BSO(2))$ is $\uub$. This follows from the homotopy commutativity of the following diagram:
$$\xymatrix{
BU(1) \ar@{->}[r]^-\Delta \ar@{->}[d]_{\simeq} & BU(1) \times BU(1) \ar@{->}[r]^{1 \times c} &BU(1) \times BU(1) \ar@{->}[d]^m \\
BSO(2) \ar@{->}[rr] &  & BU(2)
}$$
\end{proof}

\begin{remark}\label{remark:pontryagin} The above argument shows that, as an element of $E(n)^*(\cp)=E(n)^*(BSO(2))$, the class $\uub$ is in fact $v_n^{2(2^n-1)}$ times the first Pontryagin class in $E(n)$-cohomology and lifts to $ER(n)^*(\cp)$. We denote its lift by $\phat_1$ as in Theorem \ref{thm:SES}.
\end{remark}
Now that we have permanent cycles $(\uub)^l$ for $l \geq 0$, we can use these to form half of our basis for $E_1^{*, *}$. The other half of the basis will consist of classes $\uhat(\uub)^l$, $l \geq 0$. Since
$$(\uub)^l\equiv (-1)^l\uhat^{2l} \text{ mod } (\uhat^{2l+1})$$
and
$$\uhat (\uhat \ubar)^l \equiv (-1)^l\uhat^{2l+1} \text{ mod } (\uhat^{2l+2})$$
it follows that $\{\uhat^{\epsilon}(\uub)^l: \epsilon=0 \text{ or } 1,\text{ }l \geq 0\}$ clearly forms a topological basis for $E_1^{*, *}$ over $E(n)^*[x]$.

\section{Computing $E_2^{*, *}$} \label{sec:6}
To determine $d_1$ on this basis, it is necessary to distinguish between odd and even exponents of $v_n$, since $c(v_n^l)=(-1)^lv_n^l$. In what follows, recall that $\vhat_i$ are permanent cycles and note that $d_1(v_n^2)=0$. We have
\begin{align*}
d_1(v_n^{2p}(\uub)^l)&=0\\
d_1(v_n^{2p+1}(\uub)^l)&=2v_n^{2p-2^n}(\uub)^l x\\
d_1(v_n^{2p}(\uhat(\uub)^l))&=v_n^{2p-(2^n-1)}(\uhat-\ubar)(\uub)^lx\\
d_1(v_n^{2p+1}(\uhat(\uub)^l))&=v_n^{2p-2^n}(\uhat+\ubar)(\uub)^lx.
\end{align*}

Set $R=\ZZ_{(2)}[\vhat_1, \dots, \vhat_{n-1}, v_n^{\pm 2}][x]$ so that $E_1^{*, *}=(R \oplus v_nR)[[\uhat]]$. To analyze the image and kernel of $d_1$, we begin with a technical lemma concerning $\ubar$.

\begin{lemma}\label{lemma:ubar} $\ubar$ is in $R[[\uhat]]$.
\end{lemma}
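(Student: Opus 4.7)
The plan is to expand $\ubar = [-1]_{\widehat{F}}(\uhat)$ as a formal power series $\sum_{j \geq 1} c_j \uhat^j$ in $E(n)^*[[\uhat]]$, and verify via a degree argument that every $c_j$ actually lies in the subring $\ZZ_{(2)}[\vhat_1,\ldots,\vhat_{n-1},v_n^{\pm 2}] \subset R$. Since the involution $c$ preserves cohomological degree, $|\ubar| = |\uhat| = 1-\lambda$, so each $c_j$ is forced to be homogeneous of degree $(j-1)(\lambda-1)$, a nonnegative multiple of $\lambda-1$.

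The computation then reduces to the following claim: any homogeneous monomial $\vhat_1^{e_1}\cdots\vhat_{n-1}^{e_{n-1}}v_n^m$ in $E(n)^* = \ZZ_{(2)}[\vhat_1,\ldots,\vhat_{n-1},v_n^{\pm 1}]$ whose degree is a multiple of $\lambda-1$ must have $m$ even. Using $|\vhat_i|=(2^i-1)(\lambda-1)$ and $|v_n|=-2(2^n-1)$, the condition that the degree equals $N(\lambda-1)$ for some $N\in\ZZ$ rearranges to
$$(\lambda-1)\Bigl(\sum_i e_i(2^i-1)-N\Bigr)=2(2^n-1)m.$$
Substituting $\lambda-1=2^{n+2}(2^{n-1}-1)$ and using that $2^n-1$ is coprime to $2^{n+1}(2^{n-1}-1)$ (it is odd, and $2^n-1\equiv 1\pmod{2^{n-1}-1}$ via $2^n-1=2(2^{n-1}-1)+1$), one deduces $2^{n+1}(2^{n-1}-1)\mid m$, which is even for $n\geq 2$ and forces $m=0$ for $n=1$. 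In every case $m$ is even, so $c_j\in\ZZ_{(2)}[\vhat_1,\ldots,\vhat_{n-1},v_n^{\pm 2}]\subset R$, and $\ubar\in R[[\uhat]]$.

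I do not expect a real obstacle: the proof is essentially an arithmetic check on the $2$-adic valuation of $\lambda-1$. The one point to be careful about is that the degree count must be performed in the hatted generators $\vhat_i$; in the original $v_i$-basis a monomial such as $v_1 v_n^{-(2^n-1)}$ has an odd $v_n$-exponent, but it actually equals $\vhat_1$ and so contributes $v_n$-exponent zero in the hat basis. This reparametrization is what makes the conclusion come out uniformly for all $n$.
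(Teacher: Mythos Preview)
Your argument is correct. The paper, however, takes a different and more structural route: rather than analyzing degrees of the coefficients $c_j$ directly, it observes that the hatting operation is multiplicative, so the coefficients of the hatted formal group law $\widehat{F}$ are polynomials in $\vhat_1,\ldots,\vhat_n$. The key arithmetic fact the paper isolates is that $\vhat_n = v_n^{1-(2^n-1)^2}$ has \emph{even} $v_n$-exponent, hence lies in $R$; since $\vhat_1,\ldots,\vhat_{n-1}$ are already in $R$, all coefficients of $\widehat{F}$ lie in $R$. One then solves the identity $\uhat = [2]_{\widehat{F}}(\uhat) +_{\widehat{F}} \ubar$ inside $R[[\uhat]]$ to conclude $\ubar\in R[[\uhat]]$.

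Your approach bypasses the formal group law entirely and in fact yields the sharper arithmetic statement that every $v_n$-exponent appearing in $c_j$ is divisible by $2^{n+1}(2^{n-1}-1)$, i.e.\ that $c_j$ lies in $\ZZ_{(2)}[\vhat_1,\ldots,\vhat_{n-1},\vhat_n^{\pm 1}]=\widehat{E}(n)^*$. The paper's approach buys something different: because the $a_{ij}$ are \emph{polynomials} in $v_1,\ldots,v_n$, it actually shows $\ubar\in\ZZ_{(2)}[\vhat_1,\ldots,\vhat_n][[\uhat]]$ with no negative powers of $\vhat_n$, which is what underlies the later remark that $\uhat+\ubar$ has coefficients in $\ZZ_{(2)}[\vhat_1,\ldots,\vhat_n]$. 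Either refinement suffices for the lemma as stated.
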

\begin{proof} Consider $R$ as a submodule of $\ZZ_{(2)}[\vhat_1, \dots, \vhat_{n-1}, v_n^{\pm 1}][x]$ over the ring $\ZZ_{(2)}[\vhat_1, \dots, \vhat_{n-1}]$. Notice $\vhat_n=v_n^{-(2^n-1)^2+1}$ is in $R$. Thus, the coefficients of $\widehat{F}$, formed by hatting the coefficients of $F$, are also in $R$, as is $[2]_{\widehat{F}}(\uhat)$. We have
$$\uhat = [2]_{\widehat{F}}(\uhat)+_{\widehat{F}}\ubar.$$
Reducing modulo the submodule $R[[\uhat]]$, we have
$$0 \equiv 0 +_{\widehat{F}}\ubar \text{ mod }R[[\uhat]].$$
Thus, $\ubar \in R[[\uhat]]$.
\end{proof}

It follows from the formulas for $d_1$ above together with Lemma \ref{lemma:ubar} that $d_1$ interchanges classes in $R[[\uhat]]$ with classes in $v_nR[[\uhat]]$. We now describe a convenient (topological) basis for the kernel of $d_1$.

\begin{prop} \label{prop:uub}A basis for the kernel of $d_1$ over $R=\ZZ_{(2)}[\vhat_1, \dots, \vhat_{n-1}, v_n^{\pm 2}][x]$ is given by 
$$\{(\uhat \ubar)^l, v_n(\uhat-\ubar)(\uhat \ubar)^l: l \geq 0\}.$$
\end{prop}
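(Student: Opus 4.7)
The plan is to exploit the formula $d_1 = v_n^{1-2^n}(1-c)(-)x$ from Theorem~\ref{thm:bss}: since $x$ is itself $c$-fixed (a permanent cycle) and $v_n^{1-2^n}$ is a unit, $\ker d_1$ is obtained from the $c$-fixed part of $E_1^{0,*}$ by tensoring with $R$. By Lemma~\ref{lemma:ubar}, $E_1^{0,*} = R[[\uhat]] \oplus v_n R[[\uhat]]$, and since $c$ fixes $R$, negates $v_n$, and sends $\uhat \mapsto \ubar$, the fixed points split as
$$\ker d_1 = (R[[\uhat]])^{c_0} \oplus v_n (R[[\uhat]])^{-c_0},$$
where $c_0$ denotes the $R$-linear involution $\uhat \mapsto \ubar$ on $R[[\uhat]]$. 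The problem thereby reduces to identifying the $\pm 1$-eigenspaces of $c_0$.

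The crucial technical point is the claim $\sigma := \uhat + \ubar \in R[[\uub]]$. From $\uhat +_{\widehat F} \ubar = 0$ one extracts
$$\sigma \;=\; -\uub \cdot \phi(\uhat, \ubar), \qquad \phi(X,Y) = \sum_{i, j \geq 1} \widehat{a}_{ij}\, X^{i-1}Y^{j-1},$$
with $\phi$ symmetric in $X, Y$ by commutativity of $\widehat F$. Symmetric function theory for formal power series gives $\phi(\uhat, \ubar) = \psi(\sigma, \pi)$ for a unique $\psi \in R[[S, P]]$ (with $\pi = \uub$), so $\sigma$ solves $H(S, P) := S + P\psi(S, P) = 0$. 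Since $\partial_S H(0, 0) = 1$ is a unit, the formal implicit function theorem produces a unique $\Sigma \in R[[P]]$ with $\Sigma(0) = 0$ and $H(\Sigma, P) = 0$. Substituting $P \mapsto \uub \in \uhat^2 R[[\uhat]]$, the series $\Sigma(\uub)$ solves the same equation in $R[[\uhat]]$ that $\sigma$ does, so a second application of the implicit function theorem inside $R[[\uhat]]$ forces $\sigma = \Sigma(\uub) \in R[[\uub]]$.

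Granting this, $\uhat$ satisfies the monic quadratic $T^2 - \sigma T + \pi = 0$ over $R[[\pi]]$, so $R[[\uhat]]$ is a free $R[[\pi]]$-module with topological basis $\{1, \uhat\}$ (independence: apply $c_0$ to a relation $a + b\uhat = 0$ and subtract to get $b(\uhat - \ubar) = 0$, forcing $b = 0$). Under this basis $c_0(a + b\uhat) = (a + b\sigma) - b\uhat$; the $+1$-eigenspace is then $R[[\pi]] \cdot 1$, with topological $R$-basis $\{(\uub)^l\}$. For anti-invariants the condition $2a + b\sigma = 0$, combined with $\Sigma(P) = -\widehat{a}_{11} P + O(P^2)$ and $\widehat{a}_{11} = \vhat_1 \not\equiv 0 \pmod 2$ (taking $\vhat_1 = 1$ when $n = 1$), implies that $\sigma$ is not a zero-divisor in the integral domain $(R/2)[[\pi]]$ and hence $b \in 2R[[\pi]]$; writing $b = 2b'$ gives $a = -b'\sigma$, so the anti-invariant is $b'(2\uhat - \sigma) = b'(\uhat - \ubar)$. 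Thus $(R[[\uhat]])^{-c_0} = (\uhat - \ubar)R[[\pi]]$ has topological $R$-basis $\{(\uhat - \ubar)(\uub)^l\}$, and combining these descriptions yields the stated basis of $\ker d_1$.

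That the listed elements lie in $\ker d_1$ is immediate from $c$-invariance (using Proposition~\ref{thm:uubar} for $\uub$), and $R$-linear independence follows by inspecting leading $\uhat$-coefficients $(\uub)^l \equiv (-1)^l \uhat^{2l}$ and $v_n(\uhat - \ubar)(\uub)^l \equiv 2(-1)^l v_n \uhat^{2l+1}$, together with the fact that $R$ is torsion-free. The main obstacle is the formal-group-theoretic verification $\sigma \in R[[\uub]]$, which is the only step with real content.
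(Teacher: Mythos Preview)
Your proof is correct and takes a genuinely different route from the paper's. Both arguments begin by splitting $\ker d_1$ into its $R[[\uhat]]$ and $v_nR[[\uhat]]$ components, but diverge from there. The paper argues by $\uhat$-adic successive approximation: for $f_e$ it peels off one leading term at a time using $\ubar\equiv -\uhat\pmod{\uhat^2}$, and for $f_o$ it examines the first two terms and invokes $\ubar\equiv \uhat+\vhat_1\uhat^2\pmod{2,\uhat^3}$ to force the leading coefficient into $2R$. Only afterward does the paper deduce (Lemma~\ref{lemma:u+ubar}) that $\uhat+\ubar\in R[[\uub]]$ as a corollary. You reverse this order: you first prove $\sigma=\uhat+\ubar\in R[[\uub]]$ independently via symmetric functions and the formal implicit function theorem, then use the resulting quadratic relation $\uhat^2-\sigma\uhat+\pi=0$ to present $R[[\uhat]]$ as a free rank-two $R[[\pi]]$-module, reducing the problem to computing the $\pm1$-eigenspaces of a $2\times 2$ involution. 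Your approach is more structural and explains \emph{why} the basis has the form it does, and it yields Lemma~\ref{lemma:u+ubar} along the way rather than afterwards; the paper's approach is more elementary, needing only the congruences of Lemma~\ref{lemma:ubarcong} and no appeal to the implicit function theorem. One small notational slip: you write ``$E_1^{0,*}=R[[\uhat]]\oplus v_nR[[\uhat]]$,'' but $R$ already contains $x$ while $E_1^{0,*}$ does not; the decomposition you want is of $E_1^{*,*}$, and the citation of Lemma~\ref{lemma:ubar} is really needed for the well-definedness of $c_0$ on $R[[\uhat]]$ rather than for the direct-sum splitting itself.
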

\begin{proof} Let $f$ be in the kernel of $d_1$. We may write $f=f_e+v_nf_o$ with $f_e \in R[[\uhat]]$ and $v_nf_o \in v_nR[[\uhat]]$. Since $d_1$ interchanges classes in $R[[\uhat]]$ and $v_nR[[\uhat]]$, we must have both $d_1(f_e)=0$ and $d_1(v_nf_o)=0$. We will show that $f_e \in \text{span}\{(\uhat \ubar)^l\}$ and $v_nf_o\in \text{span}\{v_n(\uhat-\ubar)(\uhat \ubar)^l\}$. 

Let $f_e=\mu \uhat^j$ mod $(\uhat^{j+1})$ with $\mu \in R$. By Lemma \ref{lemma:ubarcong}, $\uhat^{\ast j} \equiv (-1)^j\uhat^j \text{ mod }(\uhat^{j+1})$. Since
$$d_1(f_e)\equiv \mu v_n^{1-2^n}(\uhat^j-\uhat^{\ast j}) \equiv \mu v_n^{1-2^n}(\uhat^j + (-1)^{j+1}\uhat^j) \text{ mod }(\uhat^{j+1})$$
must be zero, $j$ must be even. Then $f_e-(-1)^{\frac{j}{2}}\mu(\uub)^{\frac{j}{2}}$ is in $R[[\uhat]]$, in the kernel of $d_1$, and has $\uhat$-adic valuation strictly larger than that of $f_e$. Thus, $f_e$ may be approximated to any degree by polynomials in $R[\uub]$, which proves the claim for $f_e$.

We need to consider the first two terms in the case of $f_o$. Let $v_nf_o\equiv\mu v_n \uhat^j +\nu v_n \uhat^{j+1}$ mod $(\uhat^{j+2})$. Applying $d_1$ modulo $(\uhat^{j+1})$ shows that $j$ must now be odd. Next we apply $d_1$ modulo $(\uhat^{j+2})$. The congruences in Lemma \ref{lemma:ubarcong} give
\begin{align*}
\uhat^j+\uhat^{\ast j} &\equiv \vhat_1^j\uhat^{j+1} \text{ mod }(2, \uhat^{j+2})\\
\uhat^{j+1}+\uhat^{\ast j+1} &\equiv 0 \text{ mod }(2, \uhat^{j+2}).
\end{align*}
Thus,
\begin{align*}
0=d_1(v_nf_o)&\equiv v_n^{2-2^n}\mu (\uhat^j +\uhat^{\ast j})+v_n^{2-2^n}\nu (\uhat^{j+1}+\uhat^{\ast j+1})\\
&\equiv \vhat_1^j v_n^{2-2^n} \mu \uhat^{j+1}\text{ mod }(2, \uhat^{j+2}).\end{align*}
It follows that $\mu=2\gamma$ for some $\gamma \in R$. Then
$$\gamma v_n (\uhat - \ubar)(\uub)^{\frac{j-1}{2}}\equiv \mu \uhat^j \text{ mod}(\uhat^{j+1}).$$
Thus, $v_nf_o-(-1)^{\frac{j-1}{2}}\gamma v_n(\uhat - \ubar)(\uub)^{\frac{j-1}{2}}$ is in $v_nR[[\uhat]]$, is in ker$(d_1)$, and has $\uhat$-adic valuation strictly larger than that of $v_nf_o$. This shows that $f_o$ may be approximated to any degree by elements of $\text{span}\{v_n(\uhat-\ubar)(\uub)^l\}$, which proves the claim for $f_o$.

That the above set of elements is linearly independent follows from inspecting their leading terms.
\end{proof}

The next step is to relate the image of $d_1$ to its kernel. This consists of analyzing the class $\uhat+\ubar$ in terms of the above basis for the kernel.

\begin{lemma} \label{lemma:u+ubar} $\uhat+\ubar$ is in $\ZZ_{(2)}[\vhat_1, \dots, \vhat_{n-1}, v_n^{\pm 2}][[\uub]]$. In other words, there is a power series $\xi$ with coefficients in $\ZZ_{(2)}[\vhat_1, \dots, \vhat_{n-1}, v_n^{\pm 2}]$ such that $\uhat+\ubar=\xi(\uub)$.
\end{lemma}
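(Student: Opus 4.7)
The plan is to identify $\uhat + \ubar$ as an element of the kernel of $d_1$ and then to pick out the correct portion of the basis supplied by Proposition \ref{prop:uub}. The main work will lie in correctly bookkeeping the $v_n$-parity and the $x$-degree.

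First I would show that $\uhat + \ubar$ is $c$-invariant: since $c$ is an involution with $c(\uhat) = \ubar$, we have $c(\uhat + \ubar) = \ubar + \uhat$. By Theorem \ref{thm:bss}(ii), $d_1(z) = v_n^{1-2^n}(1-c)(z)\,x$, so $d_1(\uhat + \ubar) = 0$. Next, by Lemma \ref{lemma:ubar}, $\ubar \in R[[\uhat]]$, where throughout this argument I write $R = \ZZ_{(2)}[\vhat_1, \dots, \vhat_{n-1}, v_n^{\pm 2}][x]$ as in the preceding section. Consequently $\uhat + \ubar$ lies in the even-$v_n$-parity summand $R[[\uhat]]$ of the decomposition $E_1^{*,*} = (R \oplus v_n R)[[\uhat]]$.

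Then I would invoke Proposition \ref{prop:uub} to expand $\uhat + \ubar$ uniquely as
$$\uhat + \ubar = \sum_{l \geq 0} a_l\,(\uhat\ubar)^l + \sum_{l \geq 0} b_l\,v_n(\uhat - \ubar)(\uhat\ubar)^l, \qquad a_l, b_l \in R.$$
The second family of basis elements sits in the odd-parity summand $v_n R[[\uhat]]$, while the LHS is of pure even parity, so uniqueness of the basis expansion forces $b_l = 0$. Likewise, the LHS has $x$-degree zero and each remaining basis element $(\uhat\ubar)^l$ has $x$-degree zero, so each $a_l$ must be of $x$-degree zero as well, i.e., $a_l \in \ZZ_{(2)}[\vhat_1, \dots, \vhat_{n-1}, v_n^{\pm 2}]$.

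Setting $\xi(t) := \sum_l a_l\, t^l$ then gives $\uhat + \ubar = \xi(\uhat\ubar)$ with $\xi \in \ZZ_{(2)}[\vhat_1, \dots, \vhat_{n-1}, v_n^{\pm 2}][[t]]$, as required. The real content of the argument is already contained in Proposition \ref{prop:uub}; the only subtle point here is to track the $v_n$-parity and the $x$-degree simultaneously when invoking uniqueness of the basis expansion, which is what rules out the odd-parity basis vectors and the appearance of $x$ in the coefficients.
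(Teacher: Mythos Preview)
Your proof is correct and follows essentially the same route as the paper. The paper simply observes that the $f_e$-half of the proof of Proposition~\ref{prop:uub} already shows that any class in $R[[\uhat]]\cap\ker(d_1)$ lies in $R[[\uub]]$, whereas you expand in the full basis and use the $v_n$-parity decomposition to kill the $b_l$; these are two phrasings of the same argument, and your handling of the $x$-degree to land in $\ZZ_{(2)}[\vhat_1,\dots,\vhat_{n-1},v_n^{\pm 2}]$ matches the paper's ``$x$ does not divide $\uhat+\ubar$.''
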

\begin{proof} It follows from the proof of Proposition \ref{prop:uub} that $\uhat+\ubar \in R[[\uub]]$, as it shows that any class that is in $R[[\uhat]]$ and in ker$(d_1)$ is also in $R[[\uub]]$. Lemma \ref{lemma:ubar} shows that $\uhat+\ubar \in R[[\uhat]]$ and $c(\uhat+\ubar)=\uhat+\ubar$ shows it is in ker$(d_1)$. Since $x$ does not divide $\uhat+\ubar$, the coefficients of $\xi$ lie in $\ZZ_{(2)}[\vhat_1, \dots, \vhat_{n-1}, v_n^{\pm 2}]$.
\end{proof}

\begin{remark} In fact, something stronger is true. In the proof of Proposition \ref{prop:uub}, if we replace $R$ by the ring $\ZZ_{(2)}[\vhat_1, \dots, \vhat_n]$, the same argument applies to show that $\uhat+\ubar$ is in $\ZZ_{(2)}[\vhat_1, \dots, \vhat_n][[\uub]]$.
\end{remark}

We will say more about the power series expansion of $\uhat+\ubar$ in $\uub$ in section \ref{sec:11}. For now, we describe the $E_2$-page. We will present the result as a module over $E_2^{*, *}(pt)$. Recall that
\begin{align*}
E_2^{0, *}(pt)&=\ZZ_{(2)}[\vhat_1, \dots, \vhat_{n-1}, v_n^{\pm 2}]x^0\\
E_2^{s, *}(pt)&=\ZZ/2[\vhat_1, \dots, \vhat_{n-1}, v_n^{\pm 2}]x^s \ind \text{ for }s>0.
\end{align*}
 
We then have
\begin{thm}\label{thm:E2} The $E_2$-page is given by
\begin{align*} 
E_2^{0, *}&=E_2^{0, *}(pt)[[\uhat \ubar]]\{1, v_n(\uhat-\ubar)\}\\
E_2^{s, *}&=E_2^{s, *}(pt)[[\uhat \ubar]]/(\uhat + \ubar) \ind \text{ for }s>0.
\end{align*}
\end{thm}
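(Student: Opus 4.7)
The plan is to compute $E_2 = \ker(d_1)/\mathrm{im}(d_1)$ one column at a time, leveraging the explicit topological basis for $\ker(d_1)$ supplied by Proposition \ref{prop:uub} together with the four $d_1$-formulas listed at the start of Section \ref{sec:6}. Since $d_1$ raises $x$-degree by one, nothing maps into the $s=0$ column, so $E_2^{0,*} = \ker(d_1) \cap E_1^{0,*}$. Writing $S := \ZZ_{(2)}[\vhat_1, \dots, \vhat_{n-1}, v_n^{\pm 2}]$, Proposition \ref{prop:uub} directly identifies this with $S[[\uub]]\{1, v_n(\uhat - \ubar)\}$, which is precisely the claimed description of $E_2^{0,*}$ since $E_2^{0,*}(pt) = S$.

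For $s > 0$, I would start from the decomposition
\[
\ker(d_1) \cap E_1^{s,*} \;=\; S[[\uub]]\, x^s \;\oplus\; v_n(\uhat - \ubar)\, S[[\uub]]\, x^s
\]
and match each nonzero $d_1$-formula to a piece of the image. The formula $d_1(v_n^{2p+1}(\uub)^l) = 2 v_n^{2p - 2^n}(\uub)^l x$ has an even power of $v_n$ out front, and as $p$ varies the units $v_n^{\pm 2} \in S$ convert this family into $2\, S[[\uub]]\, x^s$ sitting inside the first summand. The formula $d_1(v_n^{2p}\uhat(\uub)^l) = v_n \cdot v_n^{2p-2^n}(\uhat - \ubar)(\uub)^l x$ carries an odd power of $v_n$ times a basis element and so sweeps out the entire second summand $v_n(\uhat - \ubar)\, S[[\uub]]\, x^s$. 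Finally, $d_1(v_n^{2p+1}\uhat(\uub)^l) = v_n^{2p - 2^n}(\uhat + \ubar)(\uub)^l x$; using Lemma \ref{lemma:u+ubar} to write $\uhat + \ubar = \xi(\uub)$ with $\xi \in S[[\uub]]$, this contributes $\xi(\uub)\, S[[\uub]]\, x^s$ to the first summand.

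Summing these contributions, $\mathrm{im}(d_1) \cap E_1^{s,*}$ equals $v_n(\uhat - \ubar)\, S[[\uub]]\, x^s + (2, \xi(\uub)) \cdot S[[\uub]]\, x^s$, so the quotient collapses the second summand to zero and reduces the first summand to $(S/2)[[\uub]]\, x^s / (\xi(\uub)) = E_2^{s,*}(pt)[[\uub]]/(\uhat + \ubar)$, as claimed. The one genuine subtlety is topological bookkeeping: I must verify that the $S$-submodule generated by $\{2(\uub)^l x^s : l \geq 0\}$ really agrees with $2\, S[[\uub]]\, x^s$ after closing under the $\uub$-adic topology, and similarly for the $\xi(\uub)$ family. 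This is the main place to be careful, but it reduces to the continuity of $d_1$ combined with the fact that Proposition \ref{prop:uub} presents $\ker(d_1)$ as a completed free $R$-module on the displayed basis, so taking $R$-spans and topological closures commutes cleanly with the direct-sum decomposition above.
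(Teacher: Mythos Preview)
Your proposal is correct and follows essentially the same approach as the paper: use the basis for $\ker(d_1)$ from Proposition \ref{prop:uub}, identify each of the three nonzero $d_1$-formulas with a piece of the image, and invoke Lemma \ref{lemma:u+ubar} to place $\uhat+\ubar$ inside $S[[\uub]]$. The paper's own proof is considerably terser but runs through exactly the same steps; your explicit bookkeeping of which formula hits which summand is a faithful unpacking of it.
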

\begin{proof} In the basis for the kernel given in Proposition \ref{prop:uub}, for $s>0$, the classes $v_n(\uhat-\ubar)(\uhat \ubar)^lx^s$ are targets of differentials as are the classes $2(\uub)^l x^s$. Thus, these classes only survive on the zero line. Away from the zero line, we just have $R[[\uhat \ubar]]x^s$ modulo the image of $d_1$.  Lemma \ref{lemma:u+ubar} shows that the ideal generated by $(\uhat+\ubar)x$ in $E_1^{*, *}$ is contained in $R[[\uub]][x]$. This proves the theorem.

\end{proof}


\section{The image of the norm} \label{sec:7}
We now find ourselves in a very nice place. We know how the differentials act on the coefficients, and we have a large collection of permanent cycles. The remaining permanent cycles live on the zero line and the next step is to find representatives for them in $ER(n)^*(\cp)$. We do this using the norm described by Proposition \ref{thm:norm} in the appendix. We let $N_*$ denote the map
$$N_*: E(n)^*(\cp) \longrightarrow ER(n)^*(\cp)$$
and $\N_*$ denote the map resulting from postcomposing $N_*$ with the inclusion of fixed points map $ER(n)^*(\cp) \rightarrow E(n)^*(\cp)$,
$$\N_*: E(n)^*(\cp) \longrightarrow ER(n)^*(\cp) \longrightarrow E(n)^*(\cp).$$
Thus, for any $z \in E(n)^*(\cp)$, $\N_*(z)$ is a permanent cycle on the zero line represented in $ER(n)^*(\cp)$ by $N_*(z)$. From the appendix, we have $\N_*(z)=z+c(z)$. For any $w$ such that $c(w)=w$, we have $\N_*(wz)=w\N_*(z)$. Let $S=\ZZ_{(2)}[\vhat_1, \dots, \vhat_{n-1}, v_n^{\pm 2}]$ (so that $R$ above is $S[x]$). As a module over $S[[\uub]]=\ZZ_{(2)}[\vhat_1, \dots, \vhat_{n-1}, v_n^{\pm 2}][[\uub]]$, we may write
$$E(n)^*(\cp)=S[[\uub]]\{1, v_n, \uhat, v_n\uhat\}.$$
Since $c$ fixes $S[[\uub]]$, $\N_*$ is a map of modules over $S[[\uub]]$. We restrict $\N_*$ to the submodule $S[[\uub]]\{\uhat, v_n\uhat\} \subset E(n)^*(\cp)$ and let $\imNr$ denote the image. (We restrict to the submodule because we do not want the coefficients, in particular 2, to be in $\imNr$. This is because we will mod out by $\imN$ later, and we will want multiplication by 2 to be injective on the quotient.) We have
\begin{align*}\N_*(\uhat)&=\uhat+\ubar \\
\N_*(v_n\uhat)&=v_n(\uhat-\ubar)\end{align*}
so
$$\imNr=S[[\uub]]\{\uhat+\ubar, v_n(\uhat-\ubar)\}.$$
This is a submodule of $E_1^{0, *}$, and furthermore, since elements of $\imNr$ are permanent cycles, it is contained in ker$(d_1)$. Since no differentials have their targets in the zero line, it follows that $\imNr$ is a submodule of $E_2^{0, *}$. We have a short exact sequence
$$0 \longrightarrow \imNr \longrightarrow E_2^{*, *} \longrightarrow \widetilde{E}_2^{*, *} \longrightarrow 0$$
where $\widetilde{E}_2^{*, *}$ is by definition the quotient. Since all differentials on $\imNr$ are zero, we may further view it as a sub-spectral sequence. Furthermore, since $\imNr$ injects into $E_r^{*, *}$ at each stage, it follows that the above short exact sequence is in fact a short exact sequence of spectral sequences. From Theorem \ref{thm:E2} we conclude
$$\widetilde{E}_2^{*, *}=\frac{E_2^{*, *}(pt)[[\uub]]}{(\uhat+\ubar)}.$$
In the short exact sequence above $\imNr$ collapses immediately, so it remains to compute the spectral sequence $\widetilde{E}_2^{*, *}$.

 \section{Landweber flatness} \label{sec:8}
 Let $\widehat{E}(n)^*=\ZZ_{(2)}[\vhat_1, \dots, \vhat_{n-1}, \vhat_n^{\pm 1}]$. There is an isomorphism of rings (but not graded rings) between $E(n)^*$ and $\widehat{E}(n)^*$ sending $v_k$ to $\vhat_k$. $\widehat{E}(n)^*$ consists entirely of permanent cycles. Thus $\widehat{E}(n)^* \subset ER(n)*$ and $ER(n)^*$ is a module over $\widehat{E}(n)^*$. We may view $E_r^{*, *}(pt)$ as a spectral sequence of $\widehat{E}(n)^*$-modules. The $E_2$-page of the spectral sequence of interest, $\widetilde{E}_2^{*, *}$, may be written as
 $$\widetilde{E}_2^{*, *}=E_2^{*, *}(\text{pt})[[\uhat\ubar]]/(\uhat+\ubar)=E_2^{*, *}(pt) \otimes_{\widehat{E}(n)^*} \widehat{E}(n)^*[[\uub]]/(\uhat+\ubar).$$
 The right hand coordinate of the tensor product consists entirely of permanent cycles, so we will be done if we can show that we can commute taking homology past the tensor product at each stage. That is, we need to know that tensoring with $\widehat{E}(n)^*[[\uub]]/(\uhat+\ubar)$ over $\widehat{E}(n)^*$ is exact. This would be true if $\widehat{E}(n)^*[[\uub]]/(\uhat+\ubar)$ were flat over $\widehat{E}(n)^*$, but we can in fact show that a weaker condition holds.
 
 If we identify $E(n)^*$ with $\widehat{E}(n)^*$ as above, then we may view $E_r^{*, *}(pt)$ as a spectral sequence of $E(n)^*$-modules. Starting with this observation, it is shown in \cite{KW14} that $E_r^{*, *}(pt)$ in fact lives in the category of $E(n)_*E(n)$-comodules that are finitely presented as $E(n)^*$-modules. 
 
 Thus, to solve our problem we only need to show that $\widehat{E}(n)^*[[\uub]]/(\uhat+\ubar)$ is flat on the category of finitely presented $E(n)_*E(n)$-comodules, i.e. that it is \emph{Landweber flat}. 
 
 In \cite{HS05}, Hovey and Strickland prove an $E(n)$-version of the Landweber filtration theorem. We state and prove an $E(n)$-exact functor theorem, which follows as a corollary of Hovey and Strickland's work. To be consistent with the literature, we prove the result for $E(n)_*$-modules, keeping in mind that $E(n)_*$ is formally isomorphic (as rings but not graded rings) to $E(n)^*$ so everything below holds for $E(n)^*$-modules as well.
 
 \begin{prop}\label{prop:land} Let $M$ be an $E(n)_*$-module. The functor $(-)\otimes_{E(n)_*}M$ is exact on the category of $E(n)_*E(n)$-comodules that are finitely presented as $E(n)_*$-modules if and only if for each $k\geq 0$ multiplication by $v_k$ is monic on $M/(v_0, \dots, v_{k-1})M$, i.e. $(v_0, v_1, v_2, \dots)$ is a regular sequence on $M$.
 \end{prop}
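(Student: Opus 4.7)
The plan is to adapt the classical Landweber exact functor theorem argument using the $E(n)$-analogue of the Landweber filtration theorem established by Hovey and Strickland \cite{HS05}. Recall that their result says every $E(n)_*E(n)$-comodule $N$ which is finitely presented as an $E(n)_*$-module admits a finite filtration $0=N_0 \subset N_1 \subset \cdots \subset N_m = N$ by sub-comodules whose successive quotients $N_i/N_{i-1}$ are, up to suspension, of the form $E(n)_*/I_k$ for some $0 \leq k \leq n$, where $I_k=(v_0,\dots,v_{k-1})$.

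I would first dispatch the easy direction $(\Rightarrow)$. For each $0 \leq k < n$ the sequence
$$0 \longrightarrow E(n)_*/I_k \stackrel{v_k}{\longrightarrow} E(n)_*/I_k \longrightarrow E(n)_*/I_{k+1} \longrightarrow 0$$
is a short exact sequence of finitely presented $E(n)_*E(n)$-comodules. If tensoring with $M$ over $E(n)_*$ preserves exactness, then $v_k$ acts injectively on $M/I_k M$. The condition for $k \geq n$ is automatic since $v_n$ is a unit in $E(n)_*$.

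For the harder direction $(\Leftarrow)$, it suffices by the long exact $\Tor$ sequence and a standard induction along the Hovey-Strickland filtration to show that $\Tor_1^{E(n)_*}(E(n)_*/I_k,M)=0$ for every $0 \leq k \leq n$. I would prove this by induction on $k$. The base case $k=0$ is immediate since $E(n)_*/I_0=E(n)_*$ is free. For the inductive step, applying the long exact $\Tor$ sequence to the short exact sequence displayed above and using the inductive hypothesis $\Tor_1(E(n)_*/I_k, M)=0$ yields the four-term exact sequence
$$0 \longrightarrow \Tor_1(E(n)_*/I_{k+1}, M) \longrightarrow M/I_k M \stackrel{v_k}{\longrightarrow} M/I_k M.$$
Regularity of $v_k$ on $M/I_k M$ then forces $\Tor_1(E(n)_*/I_{k+1}, M)=0$, completing the induction. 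Vanishing of $\Tor_1$ against each filtration quotient, combined with the long exact $\Tor$ sequences along the filtration, gives $\Tor_1(N, M)=0$ for every finitely presented comodule $N$, which is the desired exactness.

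The main obstacle is not a conceptual one; it is simply the appeal to Hovey-Strickland's filtration theorem, which is the technical heart of the argument. Once that input is in hand, the proof reduces to the formal $\Tor$-manipulations familiar from Landweber's original exact functor theorem for $BP_*$.
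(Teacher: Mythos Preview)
Your proposal is correct and follows essentially the same approach as the paper: both arguments use the short exact sequences $0 \to E(n)_*/I_k \xrightarrow{v_k} E(n)_*/I_k \to E(n)_*/I_{k+1} \to 0$ together with the associated long exact $\Tor$ sequences to translate the regularity condition into the vanishing of $\Tor_1^{E(n)_*}(E(n)_*/I_k, M)$, and then invoke the Hovey--Strickland filtration theorem to pass to all finitely presented comodules. The only cosmetic difference is that the paper packages both directions into a single chain of ``if and only if'' statements, whereas you separate the two implications; the content is identical.
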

 \begin{remark} Since $E(n)_*$ is height $n$ in the sense of Hovey-Strickland, there is only something to check for $0 \leq k \leq n$. For $k>n$, $M/(v_0, \dots, v_{k})=0$ so multiplication by $v_k$ is trivially monic. 
 \end{remark}
 \begin{proof} We follow Landweber's original proof over $MU_*$ in \cite{Lan76}. Applying $(-)\otimes_{E(n)_*}M$ to the sequence
 $$\begin{CD}0 @>>> E(n)_* @>p>> E(n)_* @>>> E(n)_*/(p) @>>>0 \end{CD}$$
 shows that $p: M \longrightarrow M$ is monic if and only if $\Tor_1^{E(n)_*}(E(n)_*/(p), M)=0$. 
 For $k>0$, applying $(-)\otimes_{E(n)_*}M$ to the sequence
 $$\small \begin{CD}0 @>>> E(n)_*/(v_0, \dots, v_{k-1}) @>v_k>> E(n)_*/(v_0, \dots, v_{k-1}) @>>> E(n)_*/(v_0, \dots, v_n) @>>>0\end{CD} $$
 shows that multiplication by $v_k$ is monic if and only if
 $$\Tor_1^{E(n)_*}(E(n)_*/(v_0, \dots, v_{k-1}), M) \longrightarrow \Tor_1^{E(n)_*}(E(n)_*/(v_0, \dots, v_k), M)$$
 is surjective. It follows that multiplication by $v_k$ is monic on $M/(v_0, \dots, v_{k-1})$ for all $k$ if and only if $\Tor_1^{E(n)_*}(E(n)_*/(v_0, \dots, v_k), M)$ is zero for all $k$. In \cite{HS05} it is shown that every $E(n)_*E(n)$-comodule $N$ that is finitely presented over $E(n)_*$ admits a finite filtration by subcomodules
 $$0=N_0 \subseteq N_1 \subseteq \dots \subseteq N_s=N$$
 for some $s$ with $N_r/N_{r-1} \equiv \Sigma^tE(n)_*/(v_0, \dots, v_j)$ for some $j \leq n$ and some $t$, both depending on $r$. In view of this, $\Tor_1^{E(n)_*}(E(n)_*/(v_0, \dots, v_k), M)=0$ for all $k$ is equivalent to $\Tor_1^{E(n)_*}(N, M)=0$ for all finitely presented $E(n)_*E(n)$-comodules, $N$. Finally, this is equivalent to $(-)\otimes_{E(n)_*}M$ being an exact functor on the category of $E(n)_*E(n)$-comodules finitely presented over $E(n)_*$.
 \end{proof}
 
 We now show that $M=\widehat{E}(n)^*[[\uub]]/(\uhat+\ubar)$ satisfies the algebraic criterion given above.
 
 \begin{lemma} $(\vhat_0, \dots, \vhat_{n-1}, \vhat_n)$ is a regular sequence in $\widehat{E}(n)^*[[\uub]]/(\uhat+\ubar)$.
 \end{lemma}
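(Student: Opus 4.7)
The plan is to verify that multiplication by each $\vhat_k$ is injective on $M/(\vhat_0,\dots,\vhat_{k-1})M$, where $M := \widehat{E}(n)^*[[\uub]]/(\xi(\uub))$. Since $\vhat_n$ is a unit in $\widehat{E}(n)^*$, the condition at $k=n$ is automatic, so I only need to handle $0\le k\le n-1$. I would begin by setting $t := \uub$ and $A_k := \widehat{E}(n)^*/(\vhat_0,\dots,\vhat_{k-1})$, so that $A_0=\widehat{E}(n)^*$ and $A_k=\mathbb{F}_2[\vhat_k,\dots,\vhat_{n-1},\vhat_n^{\pm 1}]$ for $k\ge 1$. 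Each $A_k$ is a Noetherian domain, as is each $A_k[[t]]$. Because $(\vhat_0,\dots,\vhat_{k-1})$ lies in the coefficient ring, $\widehat{E}(n)^*[[t]]/(\vhat_0,\dots,\vhat_{k-1}) = A_k[[t]]$, so
$$M/(\vhat_0,\dots,\vhat_{k-1})M = A_k[[t]]/(\bar\xi_k),$$
where $\bar\xi_k$ denotes the reduction of $\xi$ modulo $(\vhat_0,\dots,\vhat_{k-1})$.

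The crux is to identify the leading term of $\bar\xi_k$ as a power series in $t$. Using Lemma \ref{lemma:ubarcong} (extended to $k=n$ via the 2-series formula for $[2]_{\widehat F}(\uhat)$) together with $2\uhat \equiv 0$ modulo $\vhat_0$, I would derive
$$\uhat+\ubar \equiv \vhat_k\uhat^{2^k} \pmod{(\vhat_0,\dots,\vhat_{k-1},\uhat^{2^k+1})}$$
for $1\le k\le n$. Since $t = \uhat\ubar \equiv \uhat^2$ modulo higher powers of $\uhat$, this translates to
$$\bar\xi_k(t) = \vhat_k\, t^{2^{k-1}} + (\text{higher-order terms in }t)$$
in $A_k[[t]]$, while $\xi$ itself has leading term $\vhat_1 t$ in $\widehat{E}(n)^*[[t]]$. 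In either case, $\bar\xi_k$ is a nonzero element of the domain $A_k[[t]]$.

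Regularity then follows by a short cancellation argument. For $0\le k\le n-1$, suppose $\vhat_k f = \bar\xi_k h$ in $A_k[[t]]$. Reducing modulo $\vhat_k$ gives $\bar\xi_{k+1}\cdot\bar h = 0$ in the domain $A_{k+1}[[t]]$; since $\bar\xi_{k+1}\neq 0$, we must have $\bar h = 0$, so $h = \vhat_k h'$ for some $h'\in A_k[[t]]$. Cancelling $\vhat_k$ (a non-zero-divisor in the domain $A_k[[t]]$) yields $f = \bar\xi_k h'$, so the image of $f$ in $A_k[[t]]/(\bar\xi_k) = M/(\vhat_0,\dots,\vhat_{k-1})M$ is zero, as required.

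The main obstacle is the leading-term calculation, which is essentially the content of Lemma \ref{lemma:ubarcong} once one carefully converts between $\uhat$-valuation and $t$-valuation; the regularity itself is then a standard manipulation of non-zero-divisors in domains.
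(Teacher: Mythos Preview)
Your proof is correct and follows essentially the same argument as the paper's: lift the relation $\vhat_k f = 0$ to $\vhat_k f = g\cdot(\uhat+\ubar)$, reduce modulo $\vhat_k$, use that $\uhat+\ubar$ remains nonzero modulo $I_{k+1}$ (via its leading term from Lemma~\ref{lemma:ubarcong}) together with $A_{k+1}[[t]]$ being a domain to force $g\equiv 0$, then cancel $\vhat_k$. Your version is slightly more explicit in naming the domains $A_k[[t]]$ and in separating off the trivial case $k=n$ (where $\vhat_n$ is a unit), but the content is identical.
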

 \begin{proof} Recall our notation $I_k=(\vhat_0, \vhat_1, \dots, \vhat_{k-1})$ and $I_0=(0)$. Suppose $f(\uub) \in \widehat{E}(n)^*[[\uub]]/(I_k, \uhat+\ubar)$ is such that $\vhat_k f(\uub)=0$. Then
 $$\vhat_k f(\uub)=g(\uub)(\uhat+\ubar) \text{  mod }I_k.$$
Further modding out by $\vhat_k$, we have
$$0=g(\uub)(\uhat+\ubar) \text{ mod }I_{k+1}.$$
By Lemma \ref{lemma:ubarcong}, we have that 
$$\uhat+\ubar=\vhat_{k+1}(\uhat \ubar)^{2^{k+1}} \text{ mod } (I_{k+1}, (\uub)^{2^{k+1}+1})$$
which means $\uhat+\ubar \neq 0$ mod $I_{k+1}$. Since $I_{k+1}$ is prime in $\widehat{E}(n)^*$, it follows that $g(\uub)=0$ mod $I_{k+1}$. Then $g(\uub)=\vhat_k h(\uub)$ mod $I_k$ for some $h(\uub)$. Hence,
$$f(\uub)=h(\uub)(\uhat+\ubar) \text{ mod }I_k$$
so that $f(\uub)=0$ in $\widehat{E}(n)^*[[\uub]]/(I_k, \uhat+\ubar)$. Thus, multiplication by $\vhat_k$ is injective.
 \end{proof}
 
 Thus, $M$ is Landweber flat. That is, tensoring with $M$ over $\widehat{E}(n)^*$ is an exact functor on the category of finitely presented $E(n)_*E(n)$-comodules. Since $E_r^{*, *}(pt)$ lives in this category, we may commute homology past the tensor product at each stage of $\widetilde{E}_{r, }^{*, *}=E_r^{*, *}(pt) \otimes_{\widehat{E}(n)^*} M$. Furthermore, since $M$ consists entirely of permanent cycles by Proposition \ref{thm:uubar}, the entire spectral sequence can be evaluated on the coefficients. In other words, Theorem 4.3 in \cite{KW14} applies to show that $\widetilde{E}_{r}^{*, *}$ is isomorphic to $( E_r^{*, *}(pt)\otimes_{\widehat{E}(n)^*}M , d_r\otimes_{\widehat{E}(n)^*}\text{id}_M)$ as spectral sequences of $ER(n)^*$-modules and converges to $ER(n)^* \otimes_{\widehat{E}(n)^*}M$. We conclude
 
 \begin{prop}
 $$\widetilde{E}_{\infty}^{*, *}=E_\infty^{*, *}(pt) \otimes_{\widehat{E}(n)^*} M=E_\infty^{*, *}(pt)[[\uub]]/(\uhat+\ubar)$$
 \end{prop}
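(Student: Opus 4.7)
The plan is to combine the two ingredients just established, namely (a) that the tensor factor $M = \widehat{E}(n)^*[[\uub]]/(\uhat + \ubar)$ is Landweber flat by the preceding lemma and Proposition \ref{prop:land}, and (b) that $E_r^{*,*}(pt)$ lives in the category of $E(n)_*E(n)$-comodules that are finitely presented as $E(n)^*$-modules (cited from \cite{KW14}). Together these say that $(-) \otimes_{\widehat{E}(n)^*} M$ is exact on the sub-abelian category in which our spectral sequence pages naturally live.

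First I would verify that the identification $\widetilde{E}_2^{*,*} = E_2^{*,*}(pt) \otimes_{\widehat{E}(n)^*} M$, already recorded before the statement, is an identification of $\widetilde{E}_r^{*,*}$ as a spectral sequence with the tensored-up coefficient spectral sequence starting at $r=2$. The key observation is that every element of $M$ is a permanent cycle (by Proposition \ref{thm:uubar} applied to the powers of $\uub$, and by the fact that $\uhat+\ubar$ lies in the image of the norm and hence is a permanent cycle by Section \ref{sec:7}). Consequently, under the identification at $E_2$, the differential $d_r$ on $\widetilde{E}_r^{*,*}$ is forced to equal $d_r \otimes \mathrm{id}_M$.

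Second, I would run the induction on $r$: assuming $\widetilde{E}_r^{*,*} \cong E_r^{*,*}(pt) \otimes_{\widehat{E}(n)^*} M$ with differential $d_r \otimes \mathrm{id}_M$, exactness of $(-)\otimes_{\widehat{E}(n)^*} M$ on the relevant category lets one commute homology past the tensor product, yielding
\[
\widetilde{E}_{r+1}^{*,*} = H(\widetilde{E}_r^{*,*}, d_r \otimes \mathrm{id}_M) \cong H(E_r^{*,*}(pt), d_r) \otimes_{\widehat{E}(n)^*} M = E_{r+1}^{*,*}(pt) \otimes_{\widehat{E}(n)^*} M,
\]
and the new differential is again $d_{r+1} \otimes \mathrm{id}_M$. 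Passing to $r = 2^{n+1}$ gives the desired formula for $\widetilde{E}_\infty^{*,*}$. (This is precisely the content of Theorem 4.3 in \cite{KW14}, which I would cite directly.)

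The main obstacle, and really the only non-routine point, is making sure the hypotheses of that theorem are met in our setting: one has to check that $E_r^{*,*}(pt)$ really does sit in the category of finitely presented $E(n)_*E(n)$-comodules at every page (not just $E_1$), so that Proposition \ref{prop:land} may be invoked at every stage of the induction. This is the input supplied by \cite{KW14}; once granted, the rest is a purely formal tensor-product-with-a-flat-module argument, and the final equality with $E_\infty^{*,*}(pt)[[\uub]]/(\uhat+\ubar)$ is just the definition of $M$.
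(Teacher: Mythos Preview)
Your proposal is correct and follows essentially the same approach as the paper: the argument in the paper is precisely that $M$ is Landweber flat, $E_r^{*,*}(pt)$ lies in the category of finitely presented $E(n)_*E(n)$-comodules, $M$ consists of permanent cycles so the differential is $d_r \otimes \mathrm{id}_M$, and hence Theorem 4.3 of \cite{KW14} applies to commute homology past the tensor product at each stage. Your write-up is slightly more explicit about the induction on $r$, but the logic and the cited inputs are identical.
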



\section{The $E_\infty$-page} \label{sec:9}
We will now put all of the pieces together. Let us return to the short exact sequence of $E_2$-terms we had in Section \ref{sec:7}. 
$$0 \longrightarrow \imNr \longrightarrow E_2^{*, *} \longrightarrow \widetilde{E}_2^{*, *} \longrightarrow 0$$

\begin{lemma} Upon taking homology the induced long exact sequence collapses into short exact sequences at each stage. In particular, we have a short exact sequence 
$$0 \longrightarrow \imNr \longrightarrow E_{\infty}^{*, *} \longrightarrow E_\infty^{*, *}(\text{pt})[[\uhat\ubar]]/(\uhat+\ubar)\longrightarrow 0.$$
\end{lemma}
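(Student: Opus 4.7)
The plan is to apply the snake lemma to the short exact sequence of chain complexes
$$0 \longrightarrow \imNr \longrightarrow E_r^{*, *} \longrightarrow \widetilde{E}_r^{*, *} \longrightarrow 0$$
page by page, and to argue that the resulting connecting homomorphism in $d_r$-cohomology vanishes for bigrading reasons alone. First I would record two structural features of $\imNr$ already established in Section \ref{sec:7}: (a) the module $\imNr = S[[\uub]]\{\uhat+\ubar,\, v_n(\uhat-\ubar)\}$ is concentrated in filtration zero, so $\imNr^{s, *}=0$ for all $s>0$; and (b) because each of its generators is a permanent cycle, the differential $d_r$ vanishes identically on $\imNr$ for every $r \geq 1$, so that $H(\imNr, d_r)=\imNr$.

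Next I would write down the long exact sequence in $d_r$-cohomology induced by the short exact sequence above,
$$\cdots \longrightarrow \imNr \longrightarrow E_{r+1}^{*, *} \longrightarrow \widetilde{E}_{r+1}^{*, *} \xrightarrow{\delta} \imNr \longrightarrow \cdots,$$
and verify that $\delta=0$. Since $d_r$ has bidegree $(r, r+1)$, the connecting homomorphism sends a class in $\widetilde{E}_{r+1}^{s, t}$ to a class in $\imNr^{s+r, t+r+1}$. But the BSS is first- and fourth-quadrant in $s$, so $s \geq 0$, and hence $s+r \geq 1 > 0$, forcing the target $\imNr^{s+r, *}$ to vanish by (a). Thus $\delta$ is zero at every page, and the long exact sequence collapses into the desired short exact sequence
$$0 \longrightarrow \imNr \longrightarrow E_{r+1}^{*, *} \longrightarrow \widetilde{E}_{r+1}^{*, *} \longrightarrow 0.$$

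Iterating inductively from $r=2$ through $r=2^{n+1}$, at which stage the BSS degenerates, and then identifying $\widetilde{E}_\infty^{*, *}=E_\infty^{*, *}(\text{pt})[[\uub]]/(\uhat+\ubar)$ via the proposition from Section \ref{sec:8}, I arrive at the claimed short exact sequence at $E_\infty$. The hard part should really just be the bookkeeping---in particular, confirming that the inclusion $\imNr \hookrightarrow E_r^{*, *}$ genuinely persists unchanged from page to page, which is assured by (a) together with (b) (no differential targets the filtration-zero line from which $\imNr$ cannot escape, and no differential originates there). Conceptually the argument is clean: a sub-spectral-sequence concentrated in filtration zero and composed entirely of permanent cycles cannot be the target of any differential or of the connecting map $\delta$, so it splits off from the $d_r$-cohomology at every page.
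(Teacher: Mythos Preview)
Your argument is correct and is essentially identical to the paper's: both observe that the connecting homomorphism of the induced long exact sequence raises filtration degree (because $d_r$ does), while $\imNr$ lives entirely in filtration zero, forcing $\delta=0$ at every page. You have simply made the bidegree bookkeeping more explicit than the paper does.
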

\begin{proof}
Since the connecting homomorphism of the long exact sequence must increase filtration degree (because the differentials do), yet $\imNr$ is concentrated in filtration degree zero, the connecting homomorphism is zero at each stage. Thus we have a short exact sequence at $E_\infty$. Recall that the left hand spectral sequence collapses immediately. The right hand spectral sequence was computed in the previous section.
\end{proof}

We analyze this short exact sequence, starting away from the zero line. In strictly positive filtration degree, the left hand term is zero and so we have
$$E_{\infty}^{s, *}=E_\infty^{s, *}(pt)[[\uub]]/(\uhat+\ubar) \ind \text{ for }s>0.$$

The zero line is more involved. We begin by giving a ``polite'' answer.

\begin{prop} The zero-line $E_\infty^{0, *}$ injects into its rationalization, and the rationalization may be computed as the algebraic invariants of the rationalization of $E_1^{0,*}$:
$$\xymatrix{
E_\infty^{0, *} \ar@{^{(}->}[r] & E_\infty^{0, *} \otimes \mathbb{Q} \ar@{->}[r]^-\cong & (E_1^{0, *} \otimes \mathbb{Q})^{C_2}
}$$
\end{prop}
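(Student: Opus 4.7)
The plan is to exhibit $E_\infty^{0,*}$ as a torsion-free submodule of $E_1^{0,*}$ and then prove both inclusions of $E_\infty^{0,*}\otimes\mathbb{Q}$ with $(E_1^{0,*}\otimes\mathbb{Q})^{C_2}$ separately---one coming from the description of $d_1$ and the other from the norm. Throughout, the relevant maps are those induced by the natural inclusion of subquotients $E_\infty^{0,*} \hookrightarrow E_1^{0,*} = E(n)^*(\cp)$.

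First, I would observe that, since the spectral sequence is first-and-fourth-quadrant with $d_r$ of bidegree $(r,r+1)$, no differential can land in the zero line. Hence $E_{r+1}^{0,*} = \ker(d_r|_{E_r^{0,*}}) \subseteq E_r^{0,*}$ for every $r$, and in the limit $E_\infty^{0,*}$ sits as a submodule of $E_1^{0,*} \cong \ZZ_{(2)}[\vhat_1,\dots,\vhat_{n-1},v_n^{\pm 1}][[\uhat]]$. The latter is $\ZZ_{(2)}$-torsion-free, so $E_\infty^{0,*}$ is as well, and the localization map $E_\infty^{0,*} \to E_\infty^{0,*} \otimes \mathbb{Q}$ is injective.

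For the inclusion $E_\infty^{0,*} \otimes \mathbb{Q} \subseteq (E_1^{0,*}\otimes\mathbb{Q})^{C_2}$, Theorem~\ref{thm:bss}(ii) gives $d_1(z) = v_n^{1-2^n}(1-c)(z)x$ on the zero line with $v_n^{1-2^n}$ a unit, so $\ker(d_1|_{E_1^{0,*}}) = (E_1^{0,*})^{C_2}$. Hence $E_\infty^{0,*} \subseteq E_2^{0,*} \subseteq (E_1^{0,*})^{C_2}$; since taking $C_2$-invariants commutes with the flat extension $\ZZ_{(2)} \to \mathbb{Q}$, this passes to the desired inclusion upon rationalization.

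For the reverse inclusion I would invoke the norm from Section~\ref{sec:7}: the composite $\N_* \colon E(n)^*(\cp) \to ER(n)^*(\cp) \to E(n)^*(\cp)$ satisfies $\N_*(z) = z + c(z)$, and its image sits inside $E_\infty^{0,*}$. Consequently, for any $z \in (E_1^{0,*})^{C_2}$ we obtain $2z = z+c(z) = \N_*(z) \in E_\infty^{0,*}$, so $z \in E_\infty^{0,*} \otimes \mathbb{Q}$. The one point requiring care---the main obstacle I anticipate---is the claim that the image of $\N_*$ lies inside $E_\infty^{0,*}$. This I would establish by combining the identification $E_\infty^{0,*} = M_0/M_1 = ER(n)^*(\cp)/xER(n)^*(\cp)$ from Theorem~\ref{thm:bss}(iii) with the long exact sequence of the fibration $\Sigma^\lambda ER(n) \xrightarrow{x} ER(n) \to E(n)$, which exhibits $E_\infty^{0,*}$ as precisely the image of the restriction map $ER(n)^*(\cp) \to E(n)^*(\cp) = E_1^{0,*}$---inside which $\N_*(z)$ automatically lies by construction.
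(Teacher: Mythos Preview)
Your proof is correct, and for the first two steps (torsion-freeness of the zero line and the inclusion $E_\infty^{0,*}\subseteq (E_1^{0,*})^{C_2}$) it matches the paper exactly.

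For the reverse inclusion, however, you take a genuinely different route. The paper argues internally to the spectral sequence: since $E_r^{s,*}$ is $2$-torsion for $s>0$ once $r\geq 2$, doubling any zero-line class annihilates its image under $d_r$, and iterating through the finitely many differentials shows $2^{2^{n+1}-1}\cdot E_2^{0,*}\subseteq E_\infty^{0,*}$. You instead use the norm from the appendix: for $z\in (E_1^{0,*})^{C_2}$ one has $2z=\N_*(z)$, which lies in $\mathrm{im}(\rho)$ by construction, and you then identify $\mathrm{im}(\rho)$ with $E_\infty^{0,*}$ inside $E_1^{0,*}$ via the exact couple and the nilpotence of $x$. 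Your approach is slightly sharper (it gives the bound $2$ rather than $2^{2^{n+1}-1}$) and more conceptual, at the cost of invoking the norm machinery and checking that the abstract isomorphism $E_\infty^{0,*}\cong M_0/M_1$ really agrees, as a subset of $E_1^{0,*}$, with $\mathrm{im}(\rho)$; the paper's argument stays entirely within the spectral sequence and needs only the $2$-torsion observation from Theorem~\ref{thm:E2}.
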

\begin{proof}Since no classes on the zero line are targets of differentials, $E_\infty^{0, *}$ contains no torsion and so injects into its rationalization. $E_2^{0, *}$ is exactly the invariants in $E_1^{0, *}$. Away from the zero line, everything is 2-torsion from $E_2$ onward. Thus, for any class in $E_r^{0, *}$, twice it is in the kernel of $d_r$. Since $d_{2^{n+1}-1}$ is the last possible differential, we have that $2^{2^{n+1}-1}$ times any class on $E_2^{0, *}$ survives to $E_\infty$. After we rationalize, the isomorphism
$$E_\infty^{0, *} \otimes \mathbb{Q} \cong (E_1^{0, *} \otimes \mathbb{Q})^{C_2}$$
follows.
\end{proof}

We now describe the zero line explicitly. Recall that
$$E_2^{0, *}(pt)=\ZZ_{(2)}[\vhat_1, \dots, \vhat_{n-1}, v_n^{\pm 2}]$$
$$E_\infty^{0, *}(pt)=\ZZ_{(2)}[\vhat_1 , \dots, \vhat_{n-1}, v_n^{\pm 2^{n+1}}] \bigoplus_{0<i<n+1} (\vhat_0, \dots, \vhat_{i-1})\ZZ_{(2)}[\vhat_1 , \dots, \vhat_{n-1}, v_n^{\pm 2^{i+1}}]v_n^{2^{i}}$$

We have a short exact sequence of $E_2$-terms
$$0 \longrightarrow \imNr \longrightarrow E_2^{0, *} \longrightarrow \widetilde{E}_2^{0, *} \longrightarrow 0$$
where, as modules over $E_2^{0, *}(pt)$, we have
$$\imNr=E_2^{0, *}(pt)[[\uub]]\{\uhat+\ubar, v_n(\uhat-\ubar)\}$$
$$\widetilde{E}_2^{0, *}=E_2^{0, *}(pt)[[\uub]]/(\uhat+\ubar)$$
and
$$E_2^{0, *}=E_2^{0, *}(pt)[[\uub]]\{1, v_n(\uhat-\ubar)\}.$$
Recall that $\uhat+\ubar=\xi(\uub)\cdot 1$. When we pass to the $E_\infty$-page, we have, as a module over $E_\infty^{0, *}(pt)$, 
$$\imNr=E_\infty^{0, *}(pt)[[\uub]]\{v_n^{2p}(\uhat+\ubar), v_n^{2p+1}(\uhat-\ubar)\}/J, \ind 0 \leq p < 2^n$$
where $J$ is the submodule generated by the following relations (which come from writing $E_2^{0, *}(pt)$ as a module over $E_\infty^{0, *}(pt)$). Write $2p=2^{i+1}m+2^i$. For $0 \leq j < i$, we have
$$\vhat_j \cdot [v_n^{2p}(\uhat+\ubar)]=(\vhat_jv_n^{2p}) \cdot (\uhat+\ubar)$$
$$\vhat_j \cdot [v_n^{2p+1}(\uhat-\ubar)]=(\vhat_jv_n^{2p}) \cdot v_n(\uhat-\ubar).$$

Over $E_{\infty}^{0, *}(pt)$, we also have
$$\widetilde{E}_\infty^{0, *}=E_\infty^{0, *}(pt)[[\uub]]/(\uhat+\ubar).$$
Thus, we conclude
\begin{thm} \label{thm:einfty}As a module over $E_\infty^{0, *}(pt)$, we have
$$E_\infty^{0, *}=\frac{E_\infty^{0, *}(pt)[[\uub]]\{1, v_n^{2p}(\uhat+\ubar), v_n^{2p+1}(\uhat-\ubar)\}}{K}, \ind 0 \leq p <2^n$$
where $K$ encodes the relations generating $J$ in $\imNr$ over $E_\infty^{0, *}$ above together with the relation $v_n^0(\uhat+\ubar)=\xi(\uub)\cdot 1$.
\end{thm}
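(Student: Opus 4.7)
The plan is to assemble $E_\infty^{0, *}$ directly from the short exact sequence
$$0 \longrightarrow \imNr \longrightarrow E_\infty^{0, *} \longrightarrow \widetilde{E}_\infty^{0, *} \longrightarrow 0$$
of $E_\infty^{0, *}(pt)$-modules already extracted above. The quotient $\widetilde{E}_\infty^{0, *} = E_\infty^{0, *}(pt)[[\uub]]/(\uhat+\ubar)$ is cyclic, generated by the class of $1$, with a single defining relation $\xi(\uub) = 0$ (using $\uhat+\ubar = \xi(\uub)$ from Lemma \ref{lemma:u+ubar}). The submodule $\imNr$ has the presentation over $E_\infty^{0, *}(pt)[[\uub]]$ on generators $\{v_n^{2p}(\uhat+\ubar), v_n^{2p+1}(\uhat-\ubar) : 0 \leq p < 2^n\}$ modulo $J$, exactly as displayed in the discussion immediately preceding the theorem.

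First I would lift the generator of the quotient. The class $1 \in E_1^{0, *}$ is a permanent cycle and maps to the generator $1$ of $\widetilde{E}_\infty^{0, *}$ under the surjection, so it provides a section of the quotient map. Adjoining this class to the generators of $\imNr$ yields the generating set $\{1\} \cup \{v_n^{2p}(\uhat+\ubar), v_n^{2p+1}(\uhat-\ubar) : 0 \leq p < 2^n\}$ asserted in the theorem. The relations break into two groups: those inherited from $\imNr$ (the submodule $J$), and one extension relation coming from lifting the quotient relation $\xi(\uub)\cdot 1 = 0$. Since $\xi(\uub) = \uhat+\ubar$ holds already at the level of $E_1^{0, *}$, the image of $\xi(\uub)\cdot 1$ under $\imNr \hookrightarrow E_\infty^{0, *}$ is the class $v_n^0(\uhat+\ubar)$, so the lifted relation reads $\xi(\uub) \cdot 1 = v_n^0(\uhat+\ubar)$.

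Combining these gives the presentation with relation module $K = J + (\xi(\uub)\cdot 1 - v_n^0(\uhat+\ubar))$, which is exactly the statement of the theorem. To complete the argument I would verify that no additional extensions appear: quotienting the proposed presentation by the image of $\imNr$ recovers $\widetilde{E}_\infty^{0, *}$ on the nose (the new relation is precisely what is needed to collapse $\xi(\uub)\cdot 1$ to zero in the quotient), and the natural map from $\imNr$ into the proposed presentation is injective because $J$ is by construction the full submodule of relations among the $v_n^{2p}(\uhat\pm\ubar)$ generators, and the extension relation expresses an element of $\imNr$ in terms of the external generator $1$ rather than among the $\imNr$-generators themselves.

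The only genuine bookkeeping obstacle is the change of coefficients hidden in $J$: the description of $\imNr$ at $E_2$ is natural over $E_2^{0, *}(pt)$, whereas the statement is over the smaller ring $E_\infty^{0, *}(pt)$, and $J$ encodes precisely the syzygies that arise when one rewrites a free $E_2^{0, *}(pt)[[\uub]]$-module on $\{\uhat+\ubar, v_n(\uhat-\ubar)\}$ over $E_\infty^{0, *}(pt)[[\uub]]$. Once $J$ has been catalogued, the remaining step is a single module extension by a cyclic quotient, which as above presents no spectral sequence obstruction.
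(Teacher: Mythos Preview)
Your proposal is correct and follows essentially the same route as the paper: the paper states the theorem as the immediate conclusion of assembling the short exact sequence $0\to\imNr\to E_\infty^{0,*}\to\widetilde{E}_\infty^{0,*}\to 0$ after writing down the $E_\infty^{0,*}(pt)$-module presentation of $\imNr$ (the relations $J$) and of the cyclic quotient $\widetilde{E}_\infty^{0,*}$. Your argument is in fact somewhat more explicit than the paper's in that you spell out why the single extension relation $\xi(\uub)\cdot 1=v_n^0(\uhat+\ubar)$ suffices and why no further relations arise; one small wording point is that lifting the generator $1$ does not give an $E_\infty^{0,*}(pt)[[\uub]]$-module section (the sequence does not split), only a lift of the cyclic generator, but your subsequent discussion of the extension relation shows you are handling this correctly.
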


\section{Extension problems and multiplicative structure} \label{sec:10}
Most of the hard work in solving extension problems is already done by Propositions \ref{thm:uubar} and \ref{thm:norm} as they provide canonical lifts to $ER(n)^*(\cp)$ of our generators of $E_\infty^{*, *}$. Proposition \ref{thm:uubar} produces a class $\phat_1 \in ER(n)^*(\cp)$ whose image in $E(n)^*(\cp)$ is $\uub$. On the zero line, we also have classes $v_n^{k}(\uhat+(-1)^k\ubar)$ which are the images under the norm of classes $v_n^k\uhat \in E(n)^*(\cp)$. Since the norm factors through the map $ER(n)^*(\cp) \longrightarrow E(n)^*(\cp)$, these classes have canonical lifts in $ER(n)^*(\cp)$ as well.

In Lemma \ref{lemma:u+ubar} we showed that in $E(n)^*(\cp)$, $\uhat+\ubar$ may be written as a power series $\xi(\uub)$, with the coefficients of $\xi$ in $\widehat{E}(n)^*$. We have independently constructed lifts $N_*(\uhat)$ of $\uhat+\ubar$ and $\phat_1$ of $\uub$, so we must verify this equality lifts to $ER(n)^*(\cp)$. This turns out to be true for degree reasons: 

\begin{lemma}\label{lemma:twolifts} $N_*(\uhat)=\xi(\phat_1)$ in $ER(n)^*(\cp)$.
\end{lemma}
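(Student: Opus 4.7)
The plan is to show that $D := N_*(\uhat) - \xi(\phat_1)$ vanishes in $ER(n)^{1-\lambda}(\cp)$ via a Bockstein-filtration argument.

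First, $\xi(\phat_1)$ is well-defined in $ER(n)^{1-\lambda}(\cp)$: by Lemma \ref{lemma:u+ubar} (and the subsequent remark) the coefficients of $\xi$ lie in $\widehat{E}(n)^* \subset ER(n)^*$ via the embedding of Section \ref{sec:8}, and $\phat_1 \in ER(n)^{2(1-\lambda)}(\cp)$ is the permanent cycle of Proposition \ref{thm:uubar}. Both $N_*(\uhat)$ and $\xi(\phat_1)$ restrict to $\uhat + \ubar$ in $E(n)^{1-\lambda}(\cp)$---the first by $\N_*(\uhat) = \uhat + \ubar$, the second by $\xi(\uub) = \uhat + \ubar$---so $D$ lies in the kernel of $ER(n)^*(\cp) \to E(n)^*(\cp)$, which by the defining fibration equals $x \cdot ER(n)^*(\cp) = M_1$.

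To conclude $D = 0$ it suffices to show that the associated-graded piece $E_\infty^{s,\, s+1-\lambda}$ vanishes for every $1 \leq s \leq 2^{n+1}-2$, because the filtration then terminates at $M_{2^{n+1}-1} = 0$ (since $x^{2^{n+1}-1} = 0$). A typical monomial $\vhat^a v_n^b (\uub)^l x^s$ contributing to $E_\infty^{s,*}$ has cohomological degree of the same parity as $s$, since $\lambda - 1$ and $2(2^n - 1)$ are both even (because $\lambda$ is odd). As $1 - \lambda$ is even, the odd-$s$ strata vanish immediately by a parity contradiction. For even $s$, the key observation is that $\lambda - 1 \equiv 0 \pmod{2^{n+2}}$: the degree equation $|z| = 1 + (s-1)\lambda$ collapses modulo $2^{n+2}$ to a congruence on the required $v_n$-exponent $\beta$ that forces $\nu_2(\beta) = \nu_2(s) - 1$, where $\nu_2$ denotes the 2-adic valuation of an integer. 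But by item (iv) of the structure theorem in Section \ref{sec:3}, the admissible $v_n$-exponents at filtration $s$ have $\nu_2(\beta)$ either $\geq n+1$ or equal to some integer $i > j$, where $j$ satisfies $2^j - 1 \leq s < 2^{j+1} - 1$; combined with the elementary bound $\nu_2(s) \leq j$ (which follows from $s \leq 2^{j+1} - 2$), this contradicts both branches.

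The main obstacle is this even-$s$ bookkeeping: the parity check disposes of odd $s$ immediately, but the even-$s$ case requires unwinding the filtration-dependent $v_n$-structure from Section \ref{sec:3} and running the 2-adic valuation comparison just sketched.
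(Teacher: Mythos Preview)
Your proof is correct and follows essentially the same strategy as the paper's: both observe that $D$ lies in $x$-filtration $\geq 1$, then argue that the associated-graded pieces $E_\infty^{s,\,s+1-\lambda}$ vanish for $s\geq 1$ by a $2$-adic degree count against the $v_n$-exponents permitted by the $E_\infty(pt)$ structure (carried over to $\cp$ via the identification $E_\infty^{s,*}(\cp)=E_\infty^{s,*}(pt)[[\uub]]/(\uhat+\ubar)$ for $s>0$). The paper compresses your odd/even split into a single observation: with $j$ chosen so that $2^j-1<r\leq 2^{j+1}-1$, one has $E_\infty^{r,l}=0$ unless $l\equiv 0\pmod{2^{j+1}}$, and since $1-\lambda\equiv 0\pmod{2^{n+2}}$ this forces $r\equiv 0\pmod{2^{j+1}}$, which is impossible in that range. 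Your parity step is just the $j=0$ instance of this, and your valuation comparison $\nu_2(\beta)=\nu_2(s)-1$ versus $\nu_2(\beta)\geq j+1$ is an unpacked form of the same contradiction.
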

\begin{proof} The two classes have the same image in $E(n)^*(\cp)$, so their difference is a multiple of $x$. If $N_*(\uhat)-\xi(\phat_1) \neq 0$ in $ER(n)^*(\cp)$, let $r$ be the maximal power of $x$ that divides $N_*(\uhat)-\xi(\phat_1)$.  Suppose $r \geq 1$. Then $N_*(\uhat)-\xi(\phat_1)$ is represented by a nonzero class $z \in E_\infty^{r, 1-\lambda+r}$. Since $r \geq 1$, we have $2^j-1 <r \leq 2^{j+1}-1$. Since $E_\infty^{r, *}=0$ for $r\geq 2^{n+1}$, we must have $j <n+1$. By inspection of degrees, we have that $E_\infty^{r, l} =0$ unless $l =0$ mod $2^{j+1}$. Then
$$1-\lambda+r =0 \text{ mod }2^{j+1}.$$
Since $1-\lambda =-2^{n+2}(2^{n-1}-1)$, it follows that
$$r=0 \text{ mod }2^{j+1}$$
which is impossible. Thus, $N_*(\uhat)-\xi(\phat_1)=0$ in $ER(n)^*(\cp)$.
\end{proof}

Before we go on to solve the remaining extension problems, we pause to remark on the significance of the class $N_*(\uhat)=\xi(\phat_1)$ above which appears in the denominator of the right hand term of the short exact sequence of Theorem \ref{thm:SES}.

\begin{remark}\label{remark:uhat+ubar} $N_*(\uhat)$ is a canonical class in $ER(n)^*(\cp)$ in the following sense. The ``hatted" orientation $\uhat$ induces a map
$$\xymatrix{B(U(1) \rtimes C_2)_+=\cp_+ \wedge_{C_2}EC_{2_+} \ar@{->}[r]^-{\uhat \wedge 1} & \Sigma^{1-\lambda}\ER(n)\wedge_{C_2}EC_{2_+}}.$$
Postcomposing with the Adams isomorphism $\ER(n)\wedge_{C_2}EC_{2_+} \simeq \ER(n)^{hC_2}=ER(n)$ and precomposing with $BU(1)_+ \rightarrow B(U(1) \rtimes C_2)_+$ yields the class $N_*(\uhat)$ in the $ER(n)$-cohomology of $BU(1)=\cp$ whose image in $E(n)^*(\cp)$ is the $\uhat+\ubar$ described above. This description also gives an alternate argument that $N_*(\uhat)$ is a power series on $\phat_1$ as follows. Identifying $B(U(1) \rtimes C_2)$ above with $BO(2)$, it is shown in \cite{KW14} that $ER(n)^*(BO(2))$ is the quotient of a power series ring over $ER(n)^*$ on two classes, $\chat_1$ and $\chat_2$. The above description of $N_*(\uhat)\in ER(n)^*(\cp)$ shows that it is the image of a class in $ER(n)^*(BO(2))$, i.e. some power series in $\chat_1$ and $\chat_2$. Identifying $BU(1)$ with $BSO(2)$, it can be shown that the map $ER(n)^*(BO(2)) \longrightarrow ER(n)^*(BSO(2))$ above sends $\chat_1$ to zero and $\chat_2$ to $\phat_1$ (see Proposition \ref{thm:uubar}). It follows that $N_*(\uhat)$ is a power series on $\phat_1$ over $ER(n)^*$. Note that this argument does not identify the power series explicitly---to do that, we still need Lemmas \ref{lemma:u+ubar} and \ref{lemma:twolifts}.
\end{remark}

With canonically determined lifts in hand, we now solve all extension problems. The relations of Theorem \ref{thm:einfty} need to be lifted to $ER(n)^*(\cp)$. Additionally, we describe how classes in $\imnr$ multiply together. In both cases, we use Proposition \ref{thm:norm} in the appendix. In $1(a)$ and $(b)$ of the following lemma, recall that classes in $E(n)^*$ of the form 
$$\vhat_jv_n^{2^{i+1}m+2^i} \ind \text{ with }0 \leq j < i$$ 
are permanent cycles and lift to classes of the same name in $ER(n)^*$.

\begin{lemma}\label{lemma:mult} The following relations hold in $ER(n)^*(\cp)$:

\begin{enumerate}[1.]
\item Write $2p=2^{i+1}m+2^i$ and suppose $0 \leq j < i$. Then, as a module over $ER(n)^*$, $ER(n)^*(\cp)$ satisfies
\begin{enumerate}[(a)]
\item $\vhat_j\cdot N_*(v_n^{2p}\uhat)=(\vhat_jv_n^{2p}) \cdot N_*(\uhat)$
\item $\vhat_j \cdot N_*(v_n^{2p+1}\uhat)=(\vhat_jv_n^{2p}) \cdot N_*(v_n\uhat)$
\item $x \cdot N_*(v_n^{2p}\uhat)=0$
\item $x \cdot N_*(v_n^{2p+1}\uhat)=0$.
\end{enumerate}
\item Let $0 \leq 2p, 2l < 2^{n+1}$ and write $2p+2l=2^{n+1}q+2r$ with $0 \leq 2r<2^{n+1}$ and $q=0, 1$. As an algebra over $ER(n)^*$, $ER(n)^*(\cp)$ satisfies
\begin{enumerate}[(a)]
\item $N_*(v_n^{2p}\uhat)N_*(v_n^{2l}\uhat)=v_n^{{2^{n+1}}q}N_*(v_n^{2r}\uhat)N_*(\uhat)$
\item $N_*(v_n^{2p+1}\uhat)N_*(v_n^{2l}\uhat)=v_n^{{2^{n+1}}q}N_*(v_n^{2r+1}\uhat)N_*(\uhat)$
\item $N_*(v_n^{2p+1}\uhat)N_*(v_n^{2l+1}\uhat)=v_n^{{2^{n+1}}q}\left(N_*(v_n^{2r+2}\uhat)N_*(\uhat)-4v_n^{2r+2}\phat_1\right)$.
\end{enumerate}
\end{enumerate}
\end{lemma}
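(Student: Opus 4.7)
The plan is to reduce each identity to properties of the norm $N_*$ supplied by Proposition \ref{thm:norm}, namely Frobenius reciprocity $y\cdot N_*(z) = N_*(\iota^*(y)\cdot z)$ for $y\in ER(n)^*(\cp)$, $z\in E(n)^*(\cp)$ (where $\iota^*$ denotes the restriction $ER(n)^*(\cp)\to E(n)^*(\cp)$), together with the identity $\iota^*N_*(z) = z+c(z)$. Combining these yields the product formula
$$N_*(a)\cdot N_*(b) = N_*\bigl(a(b+c(b))\bigr).$$

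For Part 1, the hypothesis $2p=2^{i+1}m+2^i$ with $0\le j<i$ places $\vhat_j v_n^{2p}$ in the ideal piece $(\vhat_0,\dots,\vhat_{i-1})$ appearing in the description of $E_\infty^{0,*}(\text{pt})$, so it lifts canonically to $ER(n)^*$. Applying Frobenius twice (first with $y=\vhat_j$, then with $y=\vhat_j v_n^{2p}$) collapses both sides of (a) to $N_*(\vhat_j v_n^{2p}\uhat)$, and the same double application handles (b), with each side becoming $N_*(\vhat_j v_n^{2p+1}\uhat)$. For (c) and (d), the defining fibration $\Sigma^\lambda ER(n)\xrightarrow{x} ER(n)\to E(n)$ forces $\iota^*(x)=0$, so Frobenius gives $x\cdot N_*(z) = N_*(0)=0$ for any $z$.

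For Part 2(a), since $c(v_n^{2l}) = v_n^{2l}$ and $c(\uhat) = \ubar$, the product formula yields $N_*(v_n^{2p}\uhat)N_*(v_n^{2l}\uhat) = N_*(v_n^{2p+2l}\uhat(\uhat+\ubar))$. Because $v_n^{2^{n+1}}$ is a unit in $ER(n)^*$, Frobenius with $y=v_n^{2^{n+1}q}$ absorbs this unit and rewrites the right-hand side as $v_n^{2^{n+1}q} N_*(v_n^{2r}\uhat(\uhat+\ubar))$; reversing the product formula at the end reassembles this as $v_n^{2^{n+1}q} N_*(v_n^{2r}\uhat) N_*(\uhat)$. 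Part (b) is formally identical, with one extra $v_n$ carried through.

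The main obstacle is Part 2(c), where $c(v_n^{2l+1}) = -v_n^{2l+1}$ reverses a sign: the product formula now yields $N_*(v_n^{2p+2l+2}(\uhat^2-\uhat\ubar))$, while the analogous calculation gives $v_n^{2^{n+1}q} N_*(v_n^{2r+2}\uhat) N_*(\uhat) = N_*(v_n^{2p+2l+2}(\uhat^2+\uhat\ubar))$. Subtracting produces a discrepancy of $2N_*(v_n^{2p+2l+2}\uhat\ubar)$, which I must identify with $v_n^{2^{n+1}q}\cdot 4v_n^{2r+2}\phat_1$. For this, I would apply Frobenius a third time with $y=\phat_1$ and $\iota^*(\phat_1) = \uhat\ubar$, obtaining $N_*(v_n^{2p+2l+2}\uhat\ubar) = N_*(v_n^{2p+2l+2})\cdot\phat_1$. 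Because $v_n^{2p+2l+2}$ is $c$-invariant, $N_*(v_n^{2p+2l+2})$ lifts $2v_n^{2p+2l+2}\in E(n)^*$, and because $2=\vhat_0$ supplies the ideal element needed to place $2v_n^{2p+2l+2}$ inside $E_\infty^{0,*}(\text{pt})$, a canonical lift $2v_n^{2p+2l+2}\in ER(n)^*$ is available. These two lifts agree up to $x$-torsion, which should be eliminated by a bidegree argument along the lines of Lemma \ref{lemma:twolifts} once multiplied by $\phat_1$, giving $2N_*(v_n^{2p+2l+2}\uhat\ubar) = 4v_n^{2p+2l+2}\phat_1 = v_n^{2^{n+1}q}\cdot 4v_n^{2r+2}\phat_1$ and closing (c).
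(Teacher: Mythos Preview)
Your approach is the same as the paper's: both rest entirely on Frobenius reciprocity $y\cdot N_*(z)=N_*(\iota^*(y)\,z)$, which is exactly Proposition~\ref{thm:norm}(b). The paper packages this as a commutative square
\[
\xymatrix{
ER(n)^*(\cp)\otimes E(n)^*(\cp)\ar[r]^-{1\otimes N_*}\ar[d] & ER(n)^*(\cp)\otimes ER(n)^*(\cp)\ar[d]\\
E(n)^*(\cp)\ar[r]^-{N_*} & ER(n)^*(\cp)
}
\]
and checks that suitably chosen elements in the top left have the same image in the bottom left; your product formula $N_*(a)N_*(b)=N_*\bigl(a(b+c(b))\bigr)$ is the same diagram read differently. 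For Parts 1 and 2(a)(b) your argument is correct and matches the paper, which only writes out 1(a), 1(c), 2(a) and says the rest are ``similar.''

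On 2(c) you are actually more careful than the paper, which leaves it under ``similarly'' despite the genuinely new $-4v_n^{2r+2}\phat_1$ term. Your reduction to identifying $2N_*(v_n^{2p+2l+2}\uhat\ubar)$ with $4v_n^{2p+2l+2}\phat_1$ is right, as is pulling out $\phat_1$ via $\iota^*(\phat_1)=\uhat\ubar$. The one soft spot is the last sentence: the bidegree argument you gesture at does not obviously go through for $\phat_1\cdot\bigl(N_*(v_n^{2s})-2v_n^{2s}\bigr)$ in $ER(n)^*(\cp)$, because that degree can be hit in positive filtration for some $s$. The clean fix is to absorb the factor of $2$ \emph{first}: since the canonical lift $2v_n^{2s}\in ER(n)^*$ restricts to $2v_n^{2s}$, Frobenius gives
\[
2\,N_*(v_n^{2s})=N_*(2v_n^{2s})=N_*\bigl(\iota^*(2v_n^{2s})\cdot 1\bigr)=(2v_n^{2s})\cdot N_*(1),
\]
so everything reduces to $N_*(1)=2$ in $ER(n)^0$. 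That identity \emph{does} follow from the bidegree argument of Lemma~\ref{lemma:twolifts}: for $m\ge1$ with $2^j-1\le m<2^{j+1}-1$ one has $E_\infty^{m,m}(pt)=0$ because all cohomological degrees occurring in $E_\infty^{m,*}(pt)$ are divisible by $2^{j+2}$, while $m\lambda$ is not (as $\lambda$ is odd and $0<m<2^{j+1}$). Hence $ER(n)^0=\ZZ_{(2)}$ and $N_*(1)=2$, closing 2(c).
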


\begin{proof} The key facts are that $N_*$ is a map of modules over $ER(n)^*(\cp)$ and that the $ER(n)^*(\cp)$-module (really, algebra) structure on $E(n)^*(\cp)$ comes from the quotient-by-$x$ map $ER(n)^*(\cp) \longrightarrow E(n)^*(\cp)$. The module structure is described by the following diagram:
$$\xymatrix{
ER(n)^*(\cp) \otimes E(n)^*(\cp) \ar@{->}[d] \ar@{->}[r]^-{1 \otimes N_*} & ER(n)^*(\cp) \otimes ER(n)^*(\cp) \ar@{->}[d]\\
E(n)^*(\cp) \ar@{->}[r]^-{N_*} & ER(n)^*(\cp)
}$$
We prove $1(a), 1(c)$ and $2(a)$; the other relations are proved similarly. For $1(a)$, note that in the upper left corner, the classes $\vhat_j \otimes v_n^{2p}\uhat$ and $\vhat_jv_n^{2p}\otimes \uhat$ have as their images in the bottom right corner the classes $\vhat_j\cdot N_*(v_n^{2p}\uhat)$ and $(\vhat_jv_n^{2p})\cdot N_*(\uhat)$, respectively. But both $\vhat_j\otimes v_n^{2p}\uhat$ and $\vhat_jv_n^{2p}\otimes \uhat$ map to the same class, $\vhat_jv_n^{2p}\uhat$, in the bottom left corner, which proves $1(a)$. In $1(c)$, $x \otimes v_n^{2p}\uhat$ maps to zero in the bottom left corner (since $x$ maps to zero in $E(n)^*$); thus, $x \cdot N_*(v_n^{2p}\uhat)=0$. Finally, for $2(a)$, note that the classes $v_n^{2p}\uhat \otimes N_*(v_n^{2l}\uhat)$ and $v_n^{2(p+l)}\uhat \otimes N_*(\uhat)$ map to the same class under the left vertical map. Thus, their images in $ER(n)^*(\cp)$ are equal. But these are exactly $N_*(v_n^{2p}\uhat)N_*(v_n^{2l}\uhat)$ and $N_*(v_n^{2(p+l)}\uhat)N_*(\uhat)$, respectively. For any $\alpha \in E(n)^*(\cp)$ which admits a lift to $ER(n)^*(\cp)$ and any $z \in E(n)^*(\cp)$, the above diagram shows that $N_*(\alpha z)=\alpha N_*(z)$. Applying this to $\alpha=v_n^{2^{n+1}q}$ and $z=v_n^{2r}\uhat$ completes the proof of $2(a)$.
\end{proof}

We now prove Theorem \ref{thm:SES} as stated in the introduction. \\

\emph{Proof of Theorem \ref{thm:SES}.} $ER(n)^*(\cp)$ is (topologically) generated over $ER(n)^*$ by $\imnr$ and the classes $\phat_1^l$. To see the isomorphism (of $ER(n)^*$-modules)
$$ER(n)^*(\cp)/\imnr \cong ER(n)^*[[\phat_1]]/(\xi(\phat_1))$$
we must show that the intersection of $\imnr$ and the submodule of $ER(n)^*(\cp)$ generated over $ER(n)^*$ by $\{\phat_1^l| l \geq 0\}$ is precisely the ideal of $ER(n)^*(\cp)$ generated by $\xi(\phat_1)$. Clearly, $(\xi(\phat_1))$ is in this intersection. We now prove the reverse inclusion. First recall that the domain of $\Nr$ is by definition the submodule 
$${\ZZ_{(2)}[\vhat_1, \dots, \vhat_{n-1}, v_n^{\pm 2}][[\uub]]\{\uhat, v_n\uhat\}}$$
of $E(n)^*(\cp)$. Choose some
$$z=\sum_i (a_i\uhat+b_iv_n\uhat)(\uub)^i \ind \text{ with } a_i, b_i \in \ZZ_{(2)}[\vhat_1, \dots, \vhat_{n-1}, v_n^{\pm 2}]$$
so that (since $N_*$ is a map of modules over $ER(n)^*(\cp)$)
\begin{align}\label{eqn:1}N_*(z)&=\sum_i (N_*(a_i\uhat)+N_*(b_iv_n\uhat))\phat_1^i.\end{align}
Suppose that $N_*(z)$ is also in span$\{\phat_1^l|l \geq 0\}$ over $ER(n)^*$, i.e. that
\begin{align}\label{eqn:2}N_*(z)&=\sum_i\lambda_i\phat_1^i \ind \text{ with }\lambda_i \in ER(n)^*.\end{align}
We claim that $\xi(\phat_1)=N_*(\uhat)$ divides $N_*(z)$ in $ER(n)^*(\cp)$. This will follow from two claims: for all $i$, (i) $b_i=0$ and (ii) $a_i$ is in the subalgebra $E_\infty^{0, *}(pt)$ of $\ZZ_{(2)}[\vhat_1, \dots, \vhat_{n-1}, v_n^{\pm 2}]$. Together, they imply that $N_*(z)=\sum_i N_*(a_i\uhat)\phat_1^i$ with each $a_i$ having a representative in $ER(n)^*$. Since $N_*$ is a map or $ER^*(\cp)$-modules, it follows that $N_*(a_i\uhat)=a_iN_*(\uhat)$ and so $N_*(\uhat)$ divides $N_*(z)$ in $ER(n)^*(\cp)$. 

To prove both claims, we map into $E(n)^*(\cp)$. Then (\ref{eqn:1}) becomes
\begin{align}\label{eqn:3}\N_*(z)&=\sum_i(a_i(\uhat+\ubar)+b_iv_n(\uhat-\ubar))(\uub)^i\end{align}
and (\ref{eqn:2}) becomes
\begin{align}\label{eqn:4}\N_*(z)&=\sum_i\lambda_i(\uub)^i\end{align}
with $\lambda_i$ now in $E_\infty^{0, *}(pt)$ (by abuse of notation, we denote the image of $\lambda_i$ in $E_\infty^{0, *}(pt)$ by the same name). Recall that $\uhat-\ubar=2\uhat+\dots$ and $\uhat+\ubar$ is the span of $\{(\uub)^l|l \geq 0\}$ over $E_\infty^{0, *}(pt)$ (since $\vhat_i \in E_\infty^{0, *}(pt)$ for all $i$). The collection $\{(\uub)^l, \uhat(\uub)^l\}$ is clearly linearly independent over $E(n)^*$. Hence, from inspecting the right hand sides of (\ref{eqn:3}) and (\ref{eqn:4}), it follows that $b_i=0$ for all $i$.

To prove the second claim, suppose that $i_1$ is the first index such that $a_{i_1} \notin E_\infty^{0, *}(pt)$. Let $l$ be the maximal such that $a_{i_1}=f(v_n^{2^l})$ with the coefficients of $f$ in $\ZZ_{(2)}[\vhat_1, \dots, \vhat_{n-1}]$. Note that $f$ cannot be a constant polynomial since $a_{i_1}$ is not in $E_\infty^{0, *}$ by assumption. Then inspection of $E_\infty^{0, *}(pt)$ shows that $\vhat_ka_{i_1} \in E_\infty^{0, *}(pt)$ for $0 \leq k < l$ and $\vhat_ka_{i_1} \notin E_\infty^{0, *}(pt)$ for $k\geq l$. Recall that
$$\uhat+\ubar \equiv \vhat_l(\uub)^{2^l} + \dots \text{ mod }(2, \vhat_1, \dots, \vhat_{l-1}).$$
Since we know that
$$\sum_i a_i(\uhat+\ubar)(\uub)^i=\sum_i\lambda_i(\uub)^i \ind \text{ with }\lambda_i \in E_\infty^{0, *}$$
it follows from looking at lowest degree terms mod $(2, \vhat_1, \dots, \vhat_{l-1})$ that $\vhat_la_{i_1}$ is in $E_\infty^{0, *}$, a contradiction. Thus, $a_i \in E_\infty^{0, *}$ for all $i$. This proves Theorem \ref{thm:SES}.

\hfill $\Box$

Theorem \ref{thm:SES} is the nice form of the answer, but we have in fact shown something stronger. The following theorem presents $ER(n)^*(\cp)$ explicitly as an algebra over $ER(n)^*$.

\begin{thm} \label{thm:final} 
$$ER(n)^*(\cp)=ER(n)^*(pt)[[\phat_1, N_*(v_n^j\uhat)]]/K$$
where $0\leq j<2^{n+1}$ and $K$ is the ideal generated by the relation $N_*(\uhat)=\xi(\phat_1)$ of Lemma \ref{lemma:twolifts} together with the relations of Lemma \ref{lemma:mult}
\end{thm}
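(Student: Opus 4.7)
The plan is to exhibit the presentation in Theorem \ref{thm:final} as an explicit assembly of the module structure from Theorem \ref{thm:SES} with the multiplicative relations from Lemmas \ref{lemma:twolifts} and \ref{lemma:mult}. The proof proceeds by constructing an obvious surjection from the abstract algebra and then verifying injectivity by comparing associated gradeds for the $x$-adic (equivalently, BSS) filtration.

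First, I would define a continuous $ER(n)^*$-algebra homomorphism
\[
\Phi \colon ER(n)^*[[\phat_1, z_0, z_1, \dots, z_{2^{n+1}-1}]]/K \longrightarrow ER(n)^*(\cp)
\]
by $\phat_1 \mapsto \phat_1$ and $z_j \mapsto N_*(v_n^j \uhat)$. The map is well-defined precisely because the generators of $K$---the relation $z_0 = \xi(\phat_1)$ of Lemma \ref{lemma:twolifts} and the module and multiplicative relations of Lemma \ref{lemma:mult}---have already been verified to hold in $ER(n)^*(\cp)$.

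Surjectivity follows from Theorem \ref{thm:SES}. Any class in $ER(n)^*(\cp)$ decomposes topologically as the sum of an element of $\imnr$ and a power series in $\phat_1$ over $ER(n)^*$; the latter is obviously in the image of $\Phi$. For an element $N_*(w) \in \imnr$, I would decompose $w \in S[[\uub]]\{\uhat, v_n\uhat\}$ into a topological sum of terms $s \cdot v_n^j \uhat \cdot (\uub)^l$ with $0 \leq j < 2^{n+1}$ and $s$ a product of $\vhat_i$'s and powers of $v_n^{\pm 2^{n+1}}$ (all of which lie in $ER(n)^*$, since $v_n^{\pm 2^{n+1}}$ is a permanent cycle). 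Since $N_*$ is a map of $ER(n)^*(\cp)$-modules via the quotient-by-$x$ map, and since relations 1(a) and 1(b) of Lemma \ref{lemma:mult} let us absorb the scalar $s$ through $N_*$, each such term becomes $s \cdot \phat_1^l \cdot N_*(v_n^j \uhat) = \Phi(s z_j \phat_1^l)$, yielding surjectivity.

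Injectivity is the main obstacle. The strategy is to compare $x$-adic associated gradeds. The $x$-adic filtration on $ER(n)^*(\cp)$ has associated graded $E_\infty^{*, *}$, described explicitly by Theorem \ref{thm:einfty} on the zero line and by the formulas for positive filtration degree. The source inherits an $x$-adic filtration via $\Phi$. Relations 1(c) and 1(d) of Lemma \ref{lemma:mult} say that $x$ annihilates the $N_*$-classes, so at filtration $s \geq 1$ the associated graded of the source reduces to $ER(n)^*(pt)[[\phat_1]]/(\xi(\phat_1)) \cdot x^s$, matching $E_\infty^{s, *}$ exactly. On the zero line, the relations 1(a), 1(b), and 2(a)--(c), together with $z_0 = \xi(\phat_1)$, reproduce the presentation of $E_\infty^{0, *}$ given in Theorem \ref{thm:einfty}. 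Because $x^{2^{n+1}-1} = 0$, the $x$-adic filtrations on both sides are finite, and the isomorphism on associated gradeds then promotes to an isomorphism $\Phi$ itself. The hardest bookkeeping will be checking the term-by-term correspondence on the zero line---in particular, verifying that the scalar relations of Lemma \ref{lemma:mult}(1a),(1b) restricted to $E_\infty^{0,*}(pt)$ realize precisely the submodule $J$ of relations appearing in Theorem \ref{thm:einfty}, and that the product relations of part 2 (including the correction term $-4 v_n^{2r+2}\phat_1$ in 2(c)) correctly encode the multiplicative structure that is invisible at the level of the BSS.
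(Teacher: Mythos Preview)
Your approach is correct and is exactly the argument the paper has in mind: the paper's proof is a single sentence pointing to Lemmas~\ref{lemma:twolifts} and~\ref{lemma:mult} and the explicit $E_\infty$-page of Section~\ref{sec:9}, and your proposal simply unpacks that into the standard ``surjection plus associated-graded isomorphism'' verification. One caution on phrasing: when you say the source ``inherits an $x$-adic filtration via $\Phi$,'' you should instead use the \emph{intrinsic} filtration by powers of $x\in ER(n)^*$ on the abstract algebra (your subsequent computation shows this is what you actually do); pulling back the filtration along $\Phi$ would make the Hausdorff condition equivalent to injectivity of $\Phi$ and the argument circular.
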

\begin{proof} This is a consequence of Lemmas \ref{lemma:twolifts} and \ref{lemma:mult} together with the description of $E_\infty^{*, *}$ in section \ref{sec:9}.
\end{proof}

\begin{remark}\label{remark:KO} We conclude by returning to the case $n=1$. It is somewhat degenerate, as $\N_*(\uhat)=\uhat+\ubar=-\uub$ in $KU^*(\cp)$, so $N_*(\uhat)=-\phat_1$. It follows that $x \cdot \phat_1=0$ and powers of $\phat_1$ do not generate any 2-torsion. This is \emph{not} true for $n>1$. In general, $\phat_1$ supports higher powers of $x$ up to and including $x^{2^{n+1}-2}$. In light of this, when $n=1$, $\phat_1$ is redundant and it suffices to take $\{1, N_*(\uhat), N_*(v_1\uhat), N_*(v_1^2\uhat), N_*(v_1^3\uhat)\}$ as a set of algebra generators with $x \cdot N_*(v_1^k \uhat)=0$ for all $k$. The relations come from Lemma \ref{lemma:mult} and our answer matches exactly the answer in Corollary 2.13 of \cite{Yam07}. \end{remark}

 \section{Completing at $I$} \label{sec:11}
 
 We obtain an especially nice form of the answer if we complete at $I:=I_n=(2, \vhat_1, \dots, \vhat_{n-1})$. It turns out that the right hand side of the short exact sequence of Theorem \ref{thm:SES} is free after completion. To see this, we need a technical lemma concerning $\uhat+\ubar$. 
 
\begin{lemma} The following congruence holds in $\widehat{E}(n)[[\uub]]$:
$$\uhat+\ubar \equiv \vhat_n(\uub)^{2^{n-1}} \text{ mod }((\uub)^{2^{n-1}+1}, I)$$
Since $\widehat{E}(n)\subset ER(n)^*$, this lifts to
$$\xi(\phat_1)\equiv \vhat_n\phat_1^{2^{n-1}} \text{ mod }(\phat_1^{2^{n-1}+1}, I)$$
in $ER(n)^*[[\phat_1]]$.
\end{lemma}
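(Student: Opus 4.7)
The plan is to extract the congruence directly from the defining equation $\ubar +_{\widehat{F}} [2]_{\widehat{F}}(\uhat) = \uhat$ used in the proof of Lemma \ref{lemma:ubarcong}, and then translate that back into a statement about the power-series coefficients of $\xi$ via Lemma \ref{lemma:u+ubar}.

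First I would reduce modulo $I$. Since $\widehat{F}$ has height $n$ after killing $I$, the 2-series collapses to its top term: $[2]_{\widehat{F}}(\uhat) \equiv \vhat_n \uhat^{2^n} \pmod{I}$. Substituting into the displayed equation gives $\ubar +_{\widehat{F}} \vhat_n \uhat^{2^n} \equiv \uhat \pmod{I}$, so, solving for $\ubar$ to lowest $\uhat$-order, $\ubar \equiv \uhat - \vhat_n\uhat^{2^n} \pmod{(I,\uhat^{2^n+1})}$. Using $2 \in I$, the sign flips and one obtains
\[
\uhat + \ubar \;\equiv\; 2\uhat + \vhat_n\uhat^{2^n} \;\equiv\; \vhat_n\uhat^{2^n}\pmod{(I,\uhat^{2^n+1})}.
\]

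The next step is to compare this to the power-series expansion $\uhat+\ubar = \xi(\uub)$ furnished by Lemma \ref{lemma:u+ubar}, where $\xi(t) = \sum_k c_k t^k$ with $c_k \in \widehat{E}(n)$. Modulo $I$, the same computation gives $\uub = \uhat\ubar \equiv \uhat^2 \pmod{(I, \uhat^{2^n+1})}$, so for any $k$,
\[
(\uub)^k \;\equiv\; \uhat^{2k}\pmod{(I,\uhat^{2k+2^n-1})}.
\]
Matching leading terms of $\sum_k c_k(\uub)^k \equiv \vhat_n\uhat^{2^n} \pmod{(I,\uhat^{2^n+1})}$ forces $c_k \in I$ for all $k < 2^{n-1}$ and $c_{2^{n-1}} \equiv \vhat_n \pmod{I}$. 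This gives exactly the claimed congruence $\xi(\uub) \equiv \vhat_n(\uub)^{2^{n-1}} \pmod{((\uub)^{2^{n-1}+1}, I)}$ in $\widehat{E}(n)[[\uub]]$.

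Finally, the lift to $ER(n)^*[[\phat_1]]$ is automatic: the coefficients $c_k$ of $\xi$ live in $\widehat{E}(n) \subset ER(n)^*$ by Lemma \ref{lemma:u+ubar}, and Lemma \ref{lemma:twolifts} identifies $\xi(\phat_1)$ with $N_*(\uhat)$ as a power series in $\phat_1$ with the same coefficients. Since the congruence is a statement about those coefficients, it transports verbatim from $\widehat{E}(n)[[\uub]]$ to $ER(n)^*[[\phat_1]]$.

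The only subtle point is bookkeeping: one must verify that the higher $\widehat{F}$-correction terms dropped in the ``$\ubar \equiv \uhat - \vhat_n\uhat^{2^n}$'' step really do lie in $(I,\uhat^{2^n+1})$, and that substituting $\uub = \uhat^2 + O(\uhat^{2^n+1})$ into $(\uub)^k$ does not allow a coefficient $c_k$ with $k<2^{n-1}$ to sneak into the lowest-degree part. Both are immediate from the formal-group identities modulo $I$, but this is where care is needed.
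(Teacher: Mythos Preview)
Your proof is correct and follows essentially the same approach as the paper: both reduce modulo $I$ to obtain $\uhat+\ubar \equiv \vhat_n\uhat^{2^n}$ from the 2-series (the paper cites Lemma~\ref{lemma:ubarcong}, whose proof you are effectively reproducing in the case $k=n$), and then translate this to the stated congruence in $\uub$ using $\uub \equiv \uhat^2$ mod $(I,\uhat^{2^n+1})$. Your coefficient-matching argument for this translation is more explicit than the paper's terse ``it follows that,'' but the content is the same.
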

\begin{proof} By Lemma \ref{lemma:u+ubar}, $\uhat+\ubar \in \widehat{E}(n)[[\uub]]$. By Lemma \ref{lemma:ubarcong}, 
$$\uhat+\ubar \equiv \vhat_n\uhat^{2^n} \text{ mod }(\uhat^{2^n+1}, I).$$
It follows that
$$\uhat+\ubar\equiv \vhat_n(\uub)^{2^{n-1}} \text{ mod }((\uub)^{2^{n-1}+1}, I).$$
\end{proof}

We will use the following version of the Weierstrass Preparation Theorem in \cite{HKR00}.

\begin{lemma}\cite{HKR00} Let $A$ be a graded commutative ring, complete in the topology defined by powers of an ideal $I$. Suppose $\alpha(x) \in A[[x]]$ satisfies $\alpha(x) \equiv \omega x^d$ mod $(x^{d+1}, I)$, with $\omega \in A$ a unit. Then the ring $A[[x]]/(\alpha(x))$ is a free $A$-module with basis $\{1, x, x^2, \dots, x^{d-1}\}$.
\end{lemma}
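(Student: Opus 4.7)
The plan is to reduce to the case where the relation is given by a distinguished polynomial of degree $d$ (via Weierstrass preparation) and then produce a division-with-remainder algorithm that exhibits $\{1, x, \dots, x^{d-1}\}$ as an $A$-basis of the quotient. The $I$-adic completeness of $A$ enters twice: once to build the Weierstrass factorization by successive approximation, and once to guarantee convergence of the division algorithm.

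First I would establish a factorization $\alpha(x) = u(x) p(x)$ in $A[[x]]$ with $u(x) \in A[[x]]^\times$ and
$$p(x) = x^d + b_{d-1}x^{d-1} + \cdots + b_0, \qquad b_0, \dots, b_{d-1} \in I.$$
Reducing modulo $I$, the hypothesis gives $\bar\alpha(x) = x^d \bar h(x)$ in $(A/I)[[x]]$ with $\bar h$ a unit (its constant term is the image of $\omega$), so the factorization holds mod $I$ with $\bar u = \bar h$ and $\bar p = x^d$. Given a factorization $(u_k, p_k)$ modulo $I^k$, one writes $u_{k+1} = u_k + \delta u$ and $p_{k+1} = p_k + \delta p$ with $\delta u, \delta p$ having coefficients in $I^k$, and solves for these corrections using that the image of $\omega$ is a unit. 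The sequences $(u_k)$ and $(p_k)$ converge $I$-adically by completeness of $A$, and the limits give the required factorization. Since $u$ is a unit, $(\alpha) = (p)$ as ideals, so $A[[x]]/(\alpha) \cong A[[x]]/(p)$.

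Next I would prove that every $f(x) \in A[[x]]$ admits a unique decomposition $f = pq + r$ with $q \in A[[x]]$ and $r \in A[x]$ of degree less than $d$. For existence, iterate the substitution $x^d \equiv -(b_{d-1}x^{d-1} + \cdots + b_0) \pmod{p}$: each application rewrites a monomial $x^k$ with $k \geq d$ in terms of lower powers of $x$ at the cost of a factor drawn from $I$. Summing the resulting infinite sequence of corrections converges in the $I$-adic topology on $A[[x]]$ by completeness, producing an $r$ of degree $<d$ congruent to $f$ modulo $(p)$; the quotient $q$ is then recovered from $f - r$ by dividing by the monic polynomial $p$. For uniqueness it suffices to show $pq = r$ with $\deg r < d$ forces $r = q = 0$. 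Reducing modulo $I$ gives $\bar r(x) = x^d \bar q(x)$ in $(A/I)[[x]]$; the degree bound on $r$ forces $\bar r = \bar q = 0$, so the coefficients of $r$ and $q$ lie in $I$. Iterating the argument modulo $I^{k+1}$ for all $k$ places those coefficients in $\bigcap_k I^k$, which vanishes by the Hausdorff property implied by $I$-adic completeness.

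The main obstacle is the Weierstrass preparation step, where one must verify that the inductive corrections $\delta u, \delta p$ can be chosen so that the factorizations stabilize $I$-adically. This amounts to inverting an explicit linear operator on pairs $(\delta u, \delta p)$ whose principal symbol comes from multiplication by $\omega$, so invertibility follows from $\omega$ being a unit together with the fact that corrections of $I$-adic order $k$ contribute error at $I$-adic order $k+1$. Once the factorization is in hand, the rest of the proof is a formal power-series division argument combined with $I$-adic Nakayama-type reasoning.
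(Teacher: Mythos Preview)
The paper does not supply a proof of this lemma at all; it is quoted from \cite{HKR00} and used as a black box in the derivation of Proposition~\ref{prop:completion}. So there is no ``paper's proof'' to compare against.

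That said, your argument is a correct and self-contained proof of the cited result: the two steps (Weierstrass preparation by $I$-adic successive approximation, then division with remainder by the distinguished polynomial) are the standard ingredients, and you have identified the right places where completeness enters. One small comment: the more common order in the literature is the reverse---one first proves the division theorem directly for $\alpha$ (using that the degree-$d$ coefficient is a unit and the lower ones lie in $I$) and then deduces the factorization $\alpha = u p$ as a corollary. Your order is fine too, since you sketch an independent inductive construction of the factorization, but if you were writing this up you might find the division-first route slightly cleaner, as it avoids having to separately justify convergence of the preparation step.
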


In what follows, we apologize for the poor notation: the hat of completion and the hat in $\phat_1$ are unrelated. Recall the short exact sequence of Theorem \ref{thm:SES}:
$$0 \longrightarrow \imnr \longrightarrow ER(n)^*(\cp) \longrightarrow \frac{ER(n)^*[[\phat_1]]}{(\xi(\phat_1))}\longrightarrow 0$$

After completing at $I$, if we set $A={ER(n)^*}^{\wedge}_I$ we find that the Weierstrass Preparation Theorem applies to the right hand term.

\begin{prop} \label{prop:completion} ${ER(n)^*}^\wedge_I[[\phat_1]]/(\xi(\phat_1))$ is a free module over  ${ER(n)^*}^\wedge_I$ with basis $\{1, \phat_1, \phat_1^2, \dots, \phat_1^{2^{n-1}-1}\}$.

\end{prop}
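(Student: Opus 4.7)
The plan is to apply the version of the Weierstrass Preparation Theorem just quoted from \cite{HKR00}, with $A = {ER(n)^*}^\wedge_I$, $x = \phat_1$, $\alpha(x) = \xi(\phat_1)$, $d = 2^{n-1}$, and $\omega = \vhat_n$. The preceding lemma already supplies the required congruence
\[
\xi(\phat_1) \equiv \vhat_n \phat_1^{2^{n-1}} \pmod{(\phat_1^{2^{n-1}+1}, I)}
\]
inside ${ER(n)^*}^\wedge_I[[\phat_1]]$, so once the hypotheses of the preparation lemma are verified the conclusion is immediate.

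The only nontrivial check is that $\vhat_n$ is a unit in $A = {ER(n)^*}^\wedge_I$. First I would note that completion does not affect invertibility, so it suffices to show $\vhat_n$ is a unit in $ER(n)^*$ itself. Recall from the proof of Lemma \ref{lemma:ubar} that $\vhat_n = v_n^{-(2^n-1)^2 + 1}$. A direct computation gives
\[
-(2^n-1)^2 + 1 \;=\; -2^{2n} + 2^{n+1} \;=\; 2^{n+1}\bigl(1 - 2^{n-1}\bigr),
\]
so $\vhat_n = (v_n^{2^{n+1}})^{1 - 2^{n-1}}$. Since $ER(n)$ is $2^{n+2}(2^n-1)$-periodic with periodicity class $v_n^{-2^{n+1}}$, the element $v_n^{2^{n+1}}$ is invertible in $ER(n)^*$, and hence so is $\vhat_n$.

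With $\vhat_n$ a unit and $A = {ER(n)^*}^\wedge_I$ complete with respect to the image of $I$, the Weierstrass Preparation Theorem applies and yields the claim: $A[[\phat_1]]/(\xi(\phat_1))$ is free over $A$ on $\{1, \phat_1, \ldots, \phat_1^{2^{n-1}-1}\}$. The main obstacle — really the only one — is recognizing that the exponent $-(2^n-1)^2 + 1$ is divisible by $2^{n+1}$, so that $\vhat_n$ lands in the subring generated by the invertible periodicity class; once this is observed, the rest of the argument is a direct invocation of the two lemmas.
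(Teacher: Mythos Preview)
Your proof is correct and follows exactly the approach the paper intends: apply the HKR00 Weierstrass Preparation Theorem with $A = {ER(n)^*}^\wedge_I$, using the preceding lemma for the congruence. The paper itself simply asserts that the preparation theorem applies without further comment; your additional verification that $\vhat_n = (v_n^{2^{n+1}})^{1-2^{n-1}}$ is a unit in $ER(n)^*$ is a detail the paper leaves implicit but which is indeed needed for the argument to go through.
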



\section{Appendix} \label{appendix}

The purpose of this section is to establish some technical lemmas that enable us to solve extension problems in the Bockstein spectral sequence. We construct a norm map
$$N_*:E(n)^*(X) \longrightarrow ER(n)^*(X)$$
and prove that (i) it is a map of $ER(n)^*(X)$-modules and (ii) the image of this map is represented on the $E_1$-page of the Bockstein spectral sequence by the group-theoretic norm. 

For more details and more generality, see \cite{LMS86}, \cite{RV14} (for equivariant orthogonal spectra), and the author's thesis \cite{Lor16}.

We begin with some general setup. Let $G$ be a finite group. Let $p: G \longrightarrow G/G=\{e\}$ denote projection onto the quotient. Let $\iota: \{e\} \longrightarrow G$ denote inclusion of the trivial subgroup. Let $U$ be a complete $G$-universe. Let $i: U^G \longrightarrow U$ denote the inclusion of universes. Let $GSU$ denote the category of (genuine) $G$-equivariant spectra indexed over $U$. Let $E \in GSU$ be a ring spectrum. Let $D=i^*E \in GSU^G$. 

We construct the norm $N:\iota^*D \longrightarrow D^G$ as follows. First, we construct $\Nt: EG_+ \wedge D \longrightarrow p^*(D^G)$. We get $N$ by applying $\iota^*(-)$ and noting that the map $EG_+ \wedge D \longrightarrow D$ is an equivalence on underlying nonequivariant spectra.

To construct ${\Nt:EG_+ \wedge D \longrightarrow p^*(D^G)}$, we start with the unit of the $(-/G, p^*)$ adjunction, $EG_+ \wedge D \longrightarrow p^*((EG_+ \wedge D)/G)$. As $EG_+ \wedge D$ is $G$-free, we may then compose with the Adams isomorphism (see \cite{LMS86}), 
$$A:p^*((EG_+ \wedge D)/G) \longrightarrow p^*((EG_+ \wedge D)^G).$$
Finally, we compose with ${p^*((EG_+ \wedge D)^G) \longrightarrow p^*(D^G)}$. We call this composite $\Nt$. 

$EG_+ \wedge D$ is a module over $p^*(D^G)$ with action given by
$$\xymatrix{EG_+ \wedge D \wedge p^*(D^G) \ar@{->}[r]& EG_+ \wedge D \wedge D \ar@{->}[r]& EG_+ \wedge D}.$$
$p^*(D^G)$ is a module over $p^*(D^G)$ with action given by
$$\xymatrix{p^*(D^G) \wedge p^*(D^G) \ar@{->}[r]& p^*((D\wedge D)^G) \ar@{->}[r]& p^*(D^G)}.$$
Thus, both the source and target of $\Nt$ are modules over $p^*(D^G)$ and a diagram chase shows that $\Nt$ is a map of $p^*(D^G)$-modules. Thus, $N: \iota^*D \longrightarrow D^G$ is a map of $D^G$-modules.

We now postcompose $N$ with the inclusion of fixed points $D^G \longrightarrow \iota^*D$. We thus have a self-map of $\iota^*D$ which we denote by $\N$. $\N$ has a very nice expression on the underlying nonequivariant homotopy groups of $D$, $\pi_*(\iota^*D)=\pi^u_*(D)$. It is given by composing the diagonal
$$\xymatrix{\iota^*D \ar@{->}[r]& G_+ \wedge \iota^*D \ar@{=}[r]& \bigvee_{g \in G}\iota^*D}$$
with the action map $G_+\wedge \iota^*D \longrightarrow \iota^*D$. For $z\in \pi_*^u(D)$ this gives
$$\N_*(z)=\sum_{g \in G}g \cdot z.$$

We now specialize to the case of interest. Let $G=C_2$. Let $X$ be a space with trivial $C_2$-action (in our present applications, $X=\cp$ with $C_2$ acting trivially and not by conjugation). Let $E=F(X, \ER(n))$. Then $E$ is a ring spectrum via the multiplication on $\ER(n)$. Note that the underlying nonequivariant spectrum is $\iota^*E=F(X, E(n))$. This is the source of the norm, with
$$\pi_*^uE=E(n)^*(X).$$
Since $C_2$ acts trivially on $X$, the target is
$$E^{C_2}=F(X, \ER(n))^{C_2}=F(X, \ER(n)^{C_2})=F(X, ER(n))$$
with
$$\pi_*E^{C_2}=ER(n)^*(X).$$
Thus, we have shown the following.
 
\begin{prop} \label{thm:norm} \text{ }
\begin{enumerate}[(a)] 
\item There is a norm map
$$N: F(X, E(n))=\iota^*E \longrightarrow E^{C_2}=F(X, ER(n)).$$
\item Give $F(X, ER(n))$ and $F(X, E(n))$ module structures over $F(X, ER(n))$ via the multiplication on $ER(n)$ and the map of ring spectra $ER(n) \longrightarrow E(n)$, respectively. Then $N$ is a map of modules over $E^{C_2}=F(X, ER(n))$. On homotopy, it is given as a map of modules over $ER(n)^*(X)$ by
$$N_*: E(n)^*(X) \longrightarrow ER(n)^*(X).$$
\item After composing with the inclusion of fixed points $ER(n)^*(X) \longrightarrow E(n)^*(X)$ (which is one of the maps in our exact couple) we denote the composite by $\N_*$,
$$\N_*: E(n)^*(X) \longrightarrow ER(n)^*(X) \longrightarrow E(n)^*(X).$$
It is given in homotopy by $\N_*(z)=z+c(z)$.
\item In the Bockstein spectral sequence, every class $\N_*(z)$ in $E(n)^*(\cp)=E_1^{0, *}$ is a permanent cycle represented by $N_*(z) \in ER(n)^*(X)$.
\end{enumerate}
\end{prop}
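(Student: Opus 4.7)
The plan is to formalize the construction outlined in the two paragraphs immediately preceding the statement, then verify the spectral-sequence consequence (d). For part (a), set $E = F(X, \ER(n))$ and $D = i^*E$, and define $\Nt: EG_+ \wedge D \to p^*(D^{C_2})$ as the three-fold composite: the unit of the $(-/C_2, p^*)$ adjunction $EG_+ \wedge D \to p^*((EG_+\wedge D)/C_2)$, the Adams isomorphism $A$ (applicable because $EG_+\wedge D$ is $C_2$-free), and the map induced by the collapse $EG_+ \wedge D \to D$. Applying $\iota^*$ and using that $\iota^*EG_+$ is nonequivariantly contractible so that $\iota^*(EG_+\wedge D) \simeq \iota^*D$, one produces $N:\iota^*D \to D^{C_2}$. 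Since $X$ carries the trivial $C_2$-action, fixed points commute with the function spectrum to give $E^{C_2} = F(X, \ER(n)^{C_2}) = F(X, ER(n))$, while $\iota^*E = F(X, E(n))$, yielding the stated map.

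For part (b), I would verify by a diagram chase that each of the three constituent maps of $\Nt$ is a map of $p^*(D^{C_2})$-modules with respect to the structures described just before the statement. The adjunction unit is module-linear by naturality, the final projection from the Adams target down to $p^*(D^{C_2})$ is trivially so, and the Adams isomorphism $A$ is $p^*(D^{C_2})$-linear because the module action on $D^{C_2}$ factors through the trivial-representation part $U^{C_2}$ of the universe, on which $A$ is manifestly compatible. Combining, $\Nt$ is $p^*(D^{C_2})$-linear, and applying $\iota^*$ gives the $D^{C_2}$-linearity of $N$. Passing to homotopy recovers the claimed map of $ER(n)^*(X)$-modules $N_*: E(n)^*(X) \to ER(n)^*(X)$.

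For part (c), postcomposing $N$ with the inclusion of fixed points $D^{C_2} \to \iota^* D$ yields a self-map $\N$ of $\iota^*D$. The standard fact about this construction (proven e.g.\ in \cite{LMS86}) is that on underlying nonequivariant spectra $\N$ factors as the diagonal $\iota^*D \to G_+ \wedge \iota^*D$ followed by the action map; on homotopy this gives $\N_*(z) = \sum_{g\in C_2} g\cdot z = z + c(z)$. Finally for part (d), the map $ER(n)^*(X) \to E(n)^*(X) = E_1^{0,*}$ appearing in the Bockstein exact couple agrees with the edge map obtained from inclusion of fixed points; hence $\N_* = \text{edge} \circ N_*$ by construction. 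Any class in $ER(n)^*(X)$ automatically detects a permanent cycle on the zero line, and the cycle detected by $N_*(z)$ is, by the above factorization, $\N_*(z) = z + c(z)$ in $E_1^{0,*}$. The single real obstacle is the module-linearity assertion in (b): once the Adams isomorphism is shown to intertwine the two $p^*(D^{C_2})$-actions, the rest of the proposition is formal.
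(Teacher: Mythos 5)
Your proposal follows essentially the same route as the paper: the same three-fold composite (adjunction unit, Adams isomorphism for the free spectrum $EG_+\wedge D$, collapse to $p^*(D^{C_2})$) defining $\Nt$, the same passage to $N$ via $\iota^*$ and the nonequivariant equivalence $EG_+\wedge D\to D$, the same module-linearity diagram chase, the identification $\N_*(z)=z+c(z)$ via diagonal-then-action, and the same observation that classes in the image of $ER(n)^*(X)\to E(n)^*(X)$ are permanent cycles represented by their lifts. Your brief justification of why the Adams isomorphism respects the $p^*(D^{C_2})$-module structures is at the same level of detail as the paper's appeal to a diagram chase (with details deferred to the cited references), so the argument is correct as written.
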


\bibliography{ernbib}{}

\begin{thebibliography}{LMSM86}

\bibitem[AM78]{AM78}
Sh{\^o}r{\^o} Araki and Mitutaka Murayama.
\newblock {$\tau $}-cohomology theories.
\newblock {\em Japan. J. Math. (N.S.)}, 4(2):363--416, 1978.

\bibitem[Ban13]{Ban13}
Romie Banerjee.
\newblock On the {$ER(2)$}-cohomology of some odd-dimensional projective
  spaces.
\newblock {\em Topology Appl.}, 160(12):1395--1405, 2013.

\bibitem[BG10]{BG10}
Robert~R. Bruner and J.~P.~C. Greenlees.
\newblock {\em Connective real {$K$}-theory of finite groups}, volume 169 of
  {\em Mathematical Surveys and Monographs}.
\newblock American Mathematical Society, Providence, RI, 2010.

\bibitem[Fuj67]{Fuj67}
Michikazu Fujii.
\newblock {$K_{O}$}-groups of projective spaces.
\newblock {\em Osaka J. Math.}, 4:141--149, 1967.

\bibitem[Fuj76]{Fuj}
Michikazu Fujii.
\newblock Cobordism theory with reality.
\newblock {\em Math. J. Okayama Univ.}, 18(2):171--188, 1975/76.

\bibitem[HHR09]{HHR09}
Michael~A. Hill, Michael~J. Hopkins, and Douglas~C. Ravenel.
\newblock {On the non-existence of elements of Kervaire invariant one}.
\newblock {\em arXiv:0908.3724}, 2009.

\bibitem[HK01]{HK01}
Po~Hu and Igor Kriz.
\newblock Real-oriented homotopy theory and an analogue of the
  {A}dams-{N}ovikov spectral sequence.
\newblock {\em Topology}, 40(2):317--399, 2001.

\bibitem[HKR00]{HKR00}
Michael~J. Hopkins, Nicholas~J. Kuhn, and Douglas~C. Ravenel.
\newblock Generalized group characters and complex oriented cohomology
  theories.
\newblock {\em J. Amer. Math. Soc.}, 13(3):553--594 (electronic), 2000.

\bibitem[HM15]{HM15}
Michael~A. Hill and Lennart Meier.
\newblock All about $tmf_1(3)$.
\newblock {\em arXiv:1507.08115}, 2015.

\bibitem[HS05]{HS05}
Mark Hovey and Neil Strickland.
\newblock Comodules and {L}andweber exact homology theories.
\newblock {\em Adv. Math.}, 192(2):427--456, 2005.

\bibitem[KLW16a]{KLW16a}
Nitu Kitchloo, Vitaly Lorman, and W.~Stephen Wilson.
\newblock {Landweber flat real pairs and {$ER(n)$}-cohomology}.
\newblock arXiv:1603.06865, 2016.

\bibitem[KLW16b]{KLW16c}
Nitu Kitchloo, Vitaly Lorman, and W.~Stephen Wilson.
\newblock {Multiplicative structure on Real Johnson-Wilson theory}.
\newblock In preparation, 2016.

\bibitem[KLW16c]{KLW16b}
Nitu Kitchloo, Vitaly Lorman, and W.~Stephen Wilson.
\newblock {The {$ER(2)$}-cohomology of {$B\ZZ/(2^q)$} and $\mathbb{C}P^n$}.
\newblock arXiv:1605.07401, 2016.

\bibitem[KW07]{KW07}
Nitu Kitchloo and W.~Stephen Wilson.
\newblock On fibrations related to real spectra.
\newblock In {\em Proceedings of the {N}ishida {F}est ({K}inosaki 2003)},
  volume~10 of {\em Geom. Topol. Monogr.}, pages 237--244. Geom. Topol. Publ.,
  Coventry, 2007.

\bibitem[KW08a]{KW08a}
Nitu Kitchloo and W.~Stephen Wilson.
\newblock The second real {J}ohnson-{W}ilson theory and nonimmersions of
  {$RP^n$}.
\newblock {\em Homology, Homotopy Appl.}, 10(3):223--268, 2008.

\bibitem[KW08b]{KW08b}
Nitu Kitchloo and W.~Stephen Wilson.
\newblock The second real {J}ohnson-{W}ilson theory and nonimmersions of
  {$RP^n$}. {II}.
\newblock {\em Homology, Homotopy Appl.}, 10(3):269--290, 2008.

\bibitem[KW14]{KW14}
Nitu Kitchloo and W.~Stephen Wilson.
\newblock {The {$ER(n)$}-cohomology of {$BO(q)$}, and Real Johnson-Wilson
  orientations for vector bundles}.
\newblock {\em arXiv:1409.1281}, 2014.

\bibitem[Lan68]{Lan68}
Peter~S. Landweber.
\newblock Conjugations on complex manifolds and equivariant homotopy of {$MU$}.
\newblock {\em Bull. Amer. Math. Soc.}, 74:271--274, 1968.

\bibitem[Lan76]{Lan76}
Peter~S. Landweber.
\newblock Homological properties of comodules over {$M{\rm U}\sb\ast (M{\rm
  U})$}\ and {BP{$\sb\ast $}}({BP}).
\newblock {\em Amer. J. Math.}, 98(3):591--610, 1976.

\bibitem[LMSM86]{LMS86}
L.~G. Lewis, Jr., J.~P. May, M.~Steinberger, and J.~E. McClure.
\newblock {\em Equivariant stable homotopy theory}, volume 1213 of {\em Lecture
  Notes in Mathematics}.
\newblock Springer-Verlag, Berlin, 1986.
\newblock With contributions by J. E. McClure.

\bibitem[LO16]{LO16}
Gerd Laures and Martin Olbermann.
\newblock {$TMF_0(3)$}-characteristic classes for string bundles.
\newblock {\em Math. Z.}, 282(1-2):511--533, 2016.

\bibitem[Lor16]{Lor16}
Vitaly Lorman.
\newblock {Real Johnson-Wilson Theory and Computations, thesis}.
\newblock 2016.

\bibitem[MR09]{MR09}
Mark Mahowald and Charles Rezk.
\newblock Topological modular forms of level 3.
\newblock {\em Pure Appl. Math. Q.}, 5(2, Special Issue: In honor of Friedrich
  Hirzebruch. Part 1):853--872, 2009.

\bibitem[RV14]{RV14}
Holger Reich and Marco Varisco.
\newblock {On the Adams isomorphism for equivariant orthogonal spectra}.
\newblock {\em arXiv:1404.4034}, 2014.

\bibitem[San64]{San64}
B.~J. Sanderson.
\newblock Immersions and embeddings of projective spaces.
\newblock {\em Proc. London Math. Soc. (3)}, 14:137--153, 1964.

\bibitem[Yam07]{Yam07}
Atsushi Yamaguchi.
\newblock Real {$K$}-cohomology of complex projective spaces.
\newblock {\em Sci. Math. Jpn.}, 65(3):407--422, 2007.

\end{thebibliography}
\bibliographystyle{alpha}

\end{document}